\documentclass[12pt]{article}

\usepackage[english]{babel}
\usepackage{amsfonts}
\usepackage{amsmath}
\usepackage{amssymb}
\usepackage{amsthm}
\usepackage{xcolor}
\usepackage{esint} 
\usepackage[utf8]{inputenc}
\usepackage{graphicx}
\usepackage{lipsum}
\newcommand\blfootnote[1]{%
  \begingroup
  \renewcommand\thefootnote{}\footnote{#1}%
  \addtocounter{footnote}{-1}%
  \endgroup
}

\usepackage[letterpaper,top=2cm,bottom=2cm,left=3cm,right=3cm,marginparwidth=1.75cm]{geometry}

\newtheorem{theorem}{Theorem}[section]
\newtheorem*{theorem*}{Theorem}
\newtheorem{proposition}[theorem]{Proposition}

\newtheorem{corollary}[theorem]{Corollary}
\theoremstyle{definition}

\newtheorem{remark}{Remark}

\def \MM {\mathbb{M}}

\def \SS {\mathbb{S}}

\def \Om {\Omega}

\title{Isoperimetric profile function comparisons with Integral Ricci curvature bounds }
\author{Jihye Lee, Fabio Ricci}
\date{}

\begin{document}
\maketitle

\begin{abstract}
   We prove comparison results for the Isoperimetric profile function in the setting of manifolds with $L^p$ integral bounds on the Ricci curvature. We extend previous work of Ni and Wang \cite{Ni} and Bayle and Rosales \cite{bayle2} under the usual pointwise bounds for the Ricci curvature.
\end{abstract}
\blfootnote{Jihye Lee: UC Santa Barbara, Department of Mathematics, email: jihye@ucsb.edu. }
\blfootnote{Fabio Ricci: UC Santa Barbara, Department of Mathematics, email: FabioRicci@ucsb.edu. }

\section{Introduction}
Isoperimetric inequalities stand as classical pillars in geometric analysis, providing fundamental insights into the intricate balance between volume and surface area constraints within geometric spaces. Central to the study of these inequalities is the concept of the isoperimetric profile function, with explicit lower bounds representing foundational isoperimetric inequalities. These have been well studied; notably for positive sectional curvature, we have the well-known Levy-Gromov inequality \cite{Gromov} and B\'{e}rard-Besson-Gallot generalization \cite{gallot}. Later, Morgan and Johnson noted that the isoperimetric profile function must satisfy a differential inequality \cite{Morgan}, the argument was then refined by Bayle and Rosales \cite{bayle1}, \cite{bayle2}. The consequences are immediate: they were able to give a new proof of Gromov's classical result and produce new sharp comparison theorems, by integrating such a differential inequality. Later, Ni and Wang \cite{Ni} found an alternative proof of the Morgan and Johnson's result as well as new differential inequalities for the isoperimetric profile function. This has been extended in the nonsmooth setting of $RCD(K,N)$ spaces by Antonelli, Pasqualetto, Pozzetta, and Semola \cite{antonelli}. Our work resides within the smooth setting and requires an integral curvature bound as opposed to the usual pointwise lower bound. We closely follow and extend some of the results of Ni and Wang \cite{Ni}, controlling the error term in terms of the integral curvature excess. We also discuss the case of the relative isoperimetric problem in the same spirit as Bayle and Rosales \cite{bayle2}.

Let $(M,g)$ be an $n$-dimensional Riemannian manifold. The isoperimetric profile function $h_2(\beta,g)$ is defined on the interval $(0, |M|)$ as follows:
\[ h_2(\beta,g) := \inf\{|\partial \Omega|  \, | \, \Omega \text{ is a smooth domain in } M \text{ with }|\Omega| = \beta\} \]
Here, $|\Omega|$ denotes the $n$-dimensional volume, and $|\partial \Omega|$ denotes the $(n-1)$-dimensional area of $\partial \Omega$. In our definition, we allow $|M| = \infty$. A region $\Omega$ in $M$ with $|\partial \Omega| = h_2(\beta,g)$ and $|\Omega| = \beta$ is termed an isoperimetric region of volume $\beta$. The existence and regularity of such regions are well understood; for detailed discussions, refer to Chapter 6 of \cite{Sakai}. In the context of a compact Riemannian manifold $M$, we can define the normalized isoperimetric profile function $h_1(\beta, g)$. This function is defined for any $\beta \in (0,1)$, 
\[ h_1(\beta,g) := \inf \left\{ \frac{|\partial \Omega|}{|M|} \left|  \, \Omega \text{ is a smooth domain in }M  \text{ with } \frac{|\Omega|}{|M|} = \beta\right.\right\}. \]
It is well known that $h_1(\beta,g)$ is H\"older continuous and, therefore, continuous. Moreover, it satisfies the following asymptotic:
\begin{align}\label{eq:asymptotics}
    \lim_{\beta \rightarrow 0}\frac{h_1(\beta,g)}{\beta^{\frac{n-1}{n}}} = n\left(\frac{\omega_n}{|M|}\right)^{\frac{1}{n}}.
 \end{align}
Here, $\omega_n$ denotes the $n$-dimensional volume of the unit ball in Euclidean space $\mathbb{R}^n$. Then, for a compact manifold $M$, the following relationship holds:
\begin{align}\label{eq:h1h2relation}
    h_1(\beta, g) = \frac{h_2(\beta |M|,g)}{|M|}.
\end{align}
In this paper, we establish comparisons for both the $h_1$ and $h_2$ isoperimetric profile functions. Beginning with $h_2$, we first revisit the result we are generalizing. Originally proven by Morgan and Johnson \cite{Morgan}, Ni and Wang \cite{Ni} later presented an alternative argument, which we extend to incorporate integral curvature bounds.
\begin{theorem*}[Morgan and Johnson \cite{Morgan}, Ni and Wang \cite{Ni}]
    Let $(M^n,g)$ be a complete Riemannian manifold. For a given $k\in\mathbb{R}$ assume that $\mathrm{Ric}\geq (n-1)k$.
    Then for $\beta \in (0,|M|)$
    \begin{equation}\label{eq:MJ}
       h_2(\beta,g) \leq h_2(\beta,g_k),
    \end{equation}
    where $(\MM^n_k,g_k)$ is a complete simply connected space of constant curvature $k$.
    If the equality ever holds somewhere, $(M^n,g)$ must be isometric to $(\MM^n_k,g_k)$.
\end{theorem*}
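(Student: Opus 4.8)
The plan is to follow the Ni--Wang argument, which proceeds by a first-variation/second-variation analysis of the isoperimetric profile together with a Heintze--Karcher type comparison for the volume of a tubular neighborhood. Fix $\beta \in (0,|M|)$ and let $\Omega$ be an isoperimetric region of volume $\beta$ with boundary $\Sigma = \partial\Omega$; by the regularity theory (Chapter 6 of \cite{Sakai}) $\Sigma$ is smooth away from a small singular set, has constant mean curvature $H$, and is stable. The key quantity to control is the volume of the one-sided tube $\Omega_t = \{x : d(x,\Omega) \le t\}$ (and, symmetrically, the inward tube), whose derivative is $|\partial\Omega_t|$. Using the normal exponential map and the Riccati comparison for the shape operator along geodesics leaving $\Sigma$, one bounds the Jacobian of the normal exponential map by the corresponding Jacobian in the model space $\MM^n_k$, giving a differential inequality of the form
\begin{equation*}
    h_2(\beta,g)^{\frac{n}{n-1}} \text{ satisfies the same ODE comparison as in } \MM^n_k,
\end{equation*}
or more precisely that $h_2(\cdot,g)$ lies below the solution of the model ODE with the matching initial data coming from \eqref{eq:asymptotics}.

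Concretely, the first step is to record the variational identities: if $V(t) = |\Omega_t|$ and $A(t) = |\partial\Omega_t|$ then $V'(t) = A(t)$ and, by the first variation of area and the constancy of $H$, $A'(t) = \int_{\partial\Omega_t} H_t$ where $H_t$ is the mean curvature of the level set. The second step is the pointwise Riccati inequality: along a unit-speed geodesic normal to $\Sigma$, the mean curvature $H_t$ satisfies $H_t' \le -\frac{H_t^2}{n-1} - \ric(\gamma',\gamma') \le -\frac{H_t^2}{n-1} - (n-1)k$, exactly as in the model. The third step integrates this to compare $A(t)$ and $V(t)$ with the model profile: since the model space is homogeneous, balls are isoperimetric there, and one gets that the function $\beta \mapsto h_2(\beta,g)$ is bounded above by $h_2(\cdot,g_k)$ once one checks the initial condition as $\beta \to 0^+$, which is where \eqref{eq:asymptotics} enters, and the fact that on the model the profile is realized by geodesic balls. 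A subtlety to handle is that $|M|$ may be infinite and that $k$ may be positive (so $\MM^n_k$ is a sphere of finite volume), in which case one must argue that $\beta < |M| \le |\MM^n_k|$ via the Bishop--Gromov volume comparison before the statement even makes sense; this is standard but should be noted.

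For the rigidity statement, I would trace the equality case through each inequality. If $h_2(\beta,g) = h_2(\beta,g_k)$ for some $\beta$, then along the isoperimetric region $\Omega$ all the comparisons above must be equalities on an open set of values of $t$: the Riccati inequality forces $\ric(\gamma',\gamma') = (n-1)k$ in the radial directions and forces the second fundamental form of every level set $\partial\Omega_t$ to be umbilical with the model value, while the normal exponential map has the model Jacobian. Standard arguments (as in the rigidity part of Heintze--Karcher, or the Cheng--Obata-type reasoning in \cite{Ni}) then upgrade this to: the metric on the tube is a warped product with the model warping function, the level sets are totally umbilic spheres, and the curvature is constant $k$; sweeping this over all of $M$ by varying $t$ and using completeness yields an isometry $(M^n,g)\cong(\MM^n_k,g_k)$.

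The main obstacle I anticipate is \emph{not} the pointwise Riccati comparison (which is classical) but the passage to the profile inequality with the correct initial data and the handling of the singular set of $\Sigma$: one must justify that the volume-of-tube computation is valid across the low-dimensional singular set (using that it has codimension at least $7$ and finite $\mathcal H^{n-1}$-measure for $\Sigma$, so it contributes nothing to $A(t)$ and $V'(t)$), and one must justify differentiating $h_2$, which is only known to be continuous a priori — here the trick of Ni--Wang is to work with $V(t)$ and $A(t)$ directly for a fixed minimizer and only at the end compare with $h_2(\cdot,g_k)$, sidestepping differentiability of $h_2$ itself. Getting these regularity/measure-theoretic points stated cleanly, in a way that will generalize smoothly to the integral-curvature setting later in the paper, is the delicate part.
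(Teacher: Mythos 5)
Your proposal does not follow the argument the paper actually uses, and as written it has a genuine gap. The ``alternative argument'' of Ni and Wang that the paper reproduces (in the special case $\|\mathrm{Ric}_-^k\|_p=0$ of Theorems \ref{them:1.1negative} and \ref{them:1.1}) never touches an isoperimetric region at all: it is a pure competitor argument. One fixes $x\in M$, takes the geodesic ball $B_x(r)$ with $|B_x(r)|=\beta$, and notes $h_2(\beta,g)\le |\partial B_x(r)|=I_x(\beta,g)$, while in the model $h_2(\beta,g_k)=I_{\bar x}(\beta,g_k)$ because balls are minimizers there. The whole content is then the inequality $I_x(\beta,g)\le I_{\bar x}(\beta,g_k)$, proved by differentiating $D(t)=I_x(t,g)-I_{\bar x}(t,g_k)$ in the volume parameter: the derivative is a difference of averaged mean curvatures, controlled by the pointwise Laplacian comparison $m(r,\theta)\le\bar m(r)$ and by Bishop--Gromov (which gives $\bar r\le r$, hence $\bar m(r)\le\bar m(\bar r)$), so $D'\le 0$ and $D(\beta)\le D(0)=0$. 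Rigidity then comes from the equality case of these two classical comparisons, for every choice of $x$. What you describe instead is the Morgan--Johnson/Bayle route through the minimizer, its stability, and a differential inequality for the profile --- which is the machinery this paper uses for the $h_1$ (Levy--Gromov) direction in Section 3, not for \eqref{eq:MJ}.

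The concrete gaps in your route are these. First, you assume for every $\beta\in(0,|M|)$ an isoperimetric region $\Omega$ exists; the theorem explicitly allows $|M|=\infty$, and on noncompact manifolds with only $\mathrm{Ric}\ge(n-1)k$ isoperimetric regions need not exist for any given volume (regularity, which you cite, is not the issue --- existence is). The competitor argument avoids this entirely. Second, the tools you name point in the wrong direction for this inequality: a Heintze--Karcher bound on the tube $\Omega_t$ controls $\mathrm{vol}$ from above by $|\partial\Omega|$ times a model kernel, i.e.\ it gives a \emph{lower} bound on the perimeter of a region of given volume, hence a lower bound on the profile. To extract the \emph{upper} bound $h_2(\cdot,g)\le h_2(\cdot,g_k)$ from the minimizer one must instead run the full second-variation argument to get the viscosity differential inequality $h_2 h_2''+\tfrac{(h_2')^2}{n-1}+(n-1)k\le 0$ and then an ODE comparison seeded by the asymptotics \eqref{eq:asymptotics} at $\beta=0^+$; the first-variation identity $A'(t)=\int_{\partial\Omega_t}H_t$ that you record does not by itself produce the comparison, and the ODE step (one-sided data at a degenerate endpoint, viscosity sense, both signs of $k$) is exactly the part your sketch leaves unspecified. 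If you want a proof that generalizes cleanly to the integral-curvature setting of this paper, the ball-competitor argument is the one to write down: it replaces the pointwise inequality $m\le\bar m$ by the $L^{2p}$ mean curvature estimate of Petersen--Wei and Bishop--Gromov by the Chen--Wei volume comparison, which is precisely how Theorems \ref{them:1.1negative} and \ref{them:1.1} are proved.
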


Notice how the Levy-Gromov isoperimetric inequality \cite{Gromov} can be reformulated as $h_1(\beta,g)\geq h_1(\beta, g_1)$, where $g_1$ is the standard metric on the unit sphere. By combining this with the relation \eqref{eq:h1h2relation} between $h_1$ and $h_2$, as well as the inequality \eqref{eq:MJ}, we obtain the following (Corollary 3.2 in \cite{Ni}):
 \begin{align}\label{eq:Ineq}
     h_1(\beta,g_1)\leq h_1(\beta, g)\leq \frac{|{\mathrm{\SS}}^n|}{| M|}  \cdot h_1 \left( \frac{|M|}{|{\mathrm{\SS}}^n|}\beta,g_1\right).
 \end{align}
The integral Ricci curvature measures the amount of Ricci curvature that falls below $(n-1)k$ in the $L^p$ sense, where $k$ is a given constant in $\mathbb{R}$. Let $\rho(x)$ denote the smallest eigenvalue of the Ricci tensor at $x \in M$. We define $\mathrm{Ric}_-^k(x) = \max\{0,(n-1)k - \rho(x)\}$, and the integral Ricci curvature is defined as follows:
\begin{align*}
\| \mathrm{Ric}_-^k \|_p = \left( \int_M (\mathrm{Ric}_-^k(x))^p \, d\mathrm{vol} \right)^\frac{1}{p}.
\end{align*}
Integral curvature has been extensively studied in the last few decades. In particular, in this work, we will rely on the following classic results: the extension of the Laplace comparison to this setting by Petersen and Wei \cite{PetersenWei}, the improved volume comparison of Chen and Wei \cite{chenwei}, the generalization of the Heintze-Karcher inequality by Petersen and Sprouse \cite{Petersen-Sprouse}, and the extension of the Bonnet-Myers theorem by Aubry \cite{Aubry07}. Isoperimetric constant estimates have also been explored in the context of integral curvature bounds; see, for example, Gallot \cite{gallot2} and Dai, Wei, and Zhang \cite{wei2}.

 Our first result pertains to the non-compact setting where $k<0$. This is particularly noteworthy due to the scarcity of results regarding integral curvature for non-compact manifolds compared to the compact case. In the compact setting for $k>0$, our result can be used to assess the degree to which we need to relax the second inequality in \eqref{eq:Ineq} when transitioning from pointwise to integral curvature conditions.
\begin{theorem}\label{them:1.1negative}
    Let $(M^n,g)$ be a Riemmanian manifold, $p>\frac{n}{2}$ and  $k \leq 0$ be given. When $k=0$ assume that $\mathrm{diam}(M) = d < \infty$. Then for any $\beta \in (0, |M|)$, we have
\begin{align*}
    h_2(\beta,g) - h_2(\beta, g_k) &\leq \left( \frac{(n-1)(2p-1)}{2p-n} \|\mathrm{Ric}_-^k\|_p\right)^\frac{1}{2} \beta^{\frac{2p-1}{2p}}+ f(\beta,n,p,k,d,\|\mathrm{Ric}_-^k\|_p),
\end{align*}
    where $(\MM_k^n,g_k)$ is a complete simply connected space of constant curvature $k$ and $f\equiv 0$ when $\|\mathrm{Ric}_-^k \|_p=0$ recovering \eqref{eq:MJ}.
\end{theorem}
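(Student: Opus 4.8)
The plan is to rerun Ni--Wang's proof of \eqref{eq:MJ} keeping the curvature defect $\mathrm{Ric}_-^k$ explicit at every step, and to bound its cumulative effect using the integral comparison theorems of Petersen--Wei, Chen--Wei, Petersen--Sprouse and Aubry. Fix $\beta\in(0,|M|)$ and let $\Omega=\Omega_\beta$ be an isoperimetric region of volume $\beta$, so that $\Sigma=\partial\Omega$ has constant mean curvature $H=h_2'(\beta,g)$; when $k<0$ and $M$ is non-compact one first works with a perimeter-minimizing sequence and lets it converge in the final inequality, and when $k=0$ the assumption $\mathrm{diam}(M)=d<\infty$ already makes $M$ compact. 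Let $r$ be the signed distance to $\Sigma$ (negative inside $\Omega$) and let $\Omega^t=\{r<t\}$ be the parallel sets, with $\mathcal V(t)=|\Omega^t|$ and $\mathcal A(t)=|\partial\Omega^t|=\mathcal V'(t)$; every $\Omega^t$ is an admissible competitor, so $h_2(\mathcal V(t),g)\le\mathcal A(t)$ with equality at $t=0$, where $\mathcal A(0)=h_2(\beta,g)$.

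First I would derive the differential inequality governing the flow. Along $r$ the mean curvature $u=\Delta r$ of the level set satisfies the Riccati inequality $u'\le-\tfrac{u^2}{n-1}-(n-1)k+\mathrm{Ric}_-^k$ with $u=H$ on $\Sigma$; comparing with the model Laplacian $\underline{\Delta}_H$ (the solution of the equality with the same constant-mean-curvature-$H$ initial datum) and running the Petersen--Wei argument --- multiplying $\psi:=(u-\underline{\Delta}_H)_+$ by $\psi^{2p-1}$, integrating, and applying H\"older --- yields $u\le\underline{\Delta}_H+\psi$ together with
\[
\|\psi\|_{L^{2p}(M)}\le\left(\frac{(n-1)(2p-1)}{2p-n}\,\|\mathrm{Ric}_-^k\|_p\right)^{\!1/2}.
\]
Since $\mathcal A'(t)=\int_{\partial\Omega^t}u\,dA$ and $\mathcal V'(t)=\mathcal A(t)$, this gives
\[
\frac{d\mathcal A}{d\mathcal V}=\frac{1}{\mathcal A(t)}\int_{\partial\Omega^t}u\,dA\;\le\;\underline{\Delta}_H+\frac{1}{\mathcal A(t)}\int_{\partial\Omega^t}\psi\,dA,
\]
while the model ball flow, of area $\underline{\mathcal A}_H$ and volume $\underline{\mathcal V}_H$, satisfies $d\underline{\mathcal A}_H/d\underline{\mathcal V}_H=\underline{\Delta}_H$ exactly and traces out precisely the graph of $h_2(\cdot,g_k)$. (When $\mathrm{Ric}_-^k\equiv0$ one has $\psi\equiv0$ and recovers the Morgan--Johnson differential inequality verbatim.)

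Next I would integrate this comparison from the small-volume end up to $V=\beta$. As $t$ decreases to $-\mathrm{inrad}(\Omega)$ one has $\mathcal V(t)\to0$, and both $\mathcal A$ and $h_2(\cdot,g_k)$ then behave like $n\omega_n^{1/n}V^{(n-1)/n}$ (for $h_2$ by \eqref{eq:asymptotics}--\eqref{eq:h1h2relation}), so $\mathcal A-h_2(\cdot,g_k)\to0$ there and may be used to anchor the comparison. The term $\tfrac{1}{\mathcal A(t)}\int_{\partial\Omega^t}\psi\,dA$ contributes, after the substitution $d\mathcal V=\mathcal A(t)\,dt$ and the co-area formula, exactly $\int_{\Omega_\beta}\psi\,d\mathrm{vol}$, which by H\"older with exponents $\bigl(2p,\tfrac{2p}{2p-1}\bigr)$ and the bound above is at most $\bigl(\tfrac{(n-1)(2p-1)}{2p-n}\|\mathrm{Ric}_-^k\|_p\bigr)^{1/2}\beta^{(2p-1)/(2p)}$; this is the principal term. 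The leftover discrepancy between $\underline{\Delta}_H(t)$ and the model slope $h_2'(\cdot,g_k)$ at the same enclosed volume is controlled by the deviation of $\mathcal V(t)$ from the corresponding model volume, which is exactly what the refined volume comparison of Chen--Wei and the generalized Heintze--Karcher inequality of Petersen--Sprouse measure; these, the small-volume anchoring error, and the diameter bound of Aubry (used to keep the $t$-range and all auxiliary integrals finite when $k=0$, and to run the exhaustion when $k<0$) combine into one function $f(\beta,n,p,k,d,\|\mathrm{Ric}_-^k\|_p)$ built solely from $\|\mathrm{Ric}_-^k\|_p$, so that $f\equiv0$ when $\|\mathrm{Ric}_-^k\|_p=0$, whereupon the whole right-hand side vanishes and \eqref{eq:MJ} is recovered.

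I expect the main obstacle to be precisely this last bookkeeping: organizing the various comparison errors --- the slope mismatch $\underline{\Delta}_H$ versus $h_2'(\cdot,g_k)$, the deviation of $\mathcal V(t)$ from the model volume, the behaviour of $\mathcal A(t)$ near the ridge of $\Omega_\beta$, and the non-compactness and possible non-existence of minimizers when $k<0$ --- into a single admissible, explicit $f$ with the stated qualitative behaviour, all without degrading the principal term. By contrast, the Riccati/Petersen--Wei estimate for $\psi$ and the co-area reduction to $\int_{\Omega_\beta}\psi$ should be routine once the flow comparison is correctly set up.
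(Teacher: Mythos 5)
Your proposal is a genuinely different route from the paper's --- you start from an isoperimetric region $\Omega_\beta$ and flow its parallel sets (the Morgan--Johnson/Bayle strategy), whereas the paper never uses an isoperimetric region at all: it takes geodesic balls $B_x(r)$ of volume $t$ as explicit competitors, sets $I_x(t,g)=|\partial B_x(r)|$, and bounds $h_2(\beta,g)-h_2(\beta,g_k)\le I_x(\beta,g)-I_{\bar x}(\beta,g_k)$, controlling $D'(t)$ by the Petersen--Wei mean curvature comparison (giving exactly your principal term) and by the Chen--Wei volume comparison to relate the radii $r$ and $\bar r$ of equal-volume balls. Unfortunately your version has a gap that I do not see how to close. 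The anchoring step fails: as $t\downarrow-\mathrm{inrad}(\Omega_\beta)$ the inner parallel bodies $\Omega^t$ collapse onto the ridge (cut locus of $\partial\Omega_\beta$ inside $\Omega_\beta$), and there is no reason for $\mathcal A(t)$ to behave like $n\omega_n^{1/n}\mathcal V(t)^{(n-1)/n}$. The asymptotic \eqref{eq:asymptotics} describes \emph{optimal} regions of small volume, not the inner parallel sets of a fixed large region; e.g.\ for a slab-type isoperimetric region in a flat torus the inner parallel sets converge to a totally geodesic hypersurface, so $\mathcal V(t)\to0$ while $\mathcal A(t)$ stays bounded away from $0$, and $\mathcal A-h_2(\cdot,g_k)$ does not tend to $0$ at the small-volume end. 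Without that anchor, integrating your differential inequality up to $V=\beta$ gives nothing.

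There are secondary problems as well. The Riccati comparison $u'\le -u^2/(n-1)-(n-1)k+\mathrm{Ric}_-^k$ with initial datum $u=H$ on $\Sigma$ propagates along integral curves of $\nabla r$, i.e.\ \emph{outward}; on the inner parallel sets (the ones you integrate over) the flow toward smaller volume runs against $\nabla r$, so the inequality must be re-derived with the inward normal and initial mean curvature $-H$, and the sign of $d\mathcal A/d\mathcal V$ along that branch needs care --- this is precisely why Bayle-type arguments produce a second-order viscosity inequality at the tangency point $\beta$ plus a separate ODE comparison principle, rather than a first-order integration along one flow. Finally, for $k<0$ and $M$ non-compact an isoperimetric region of volume $\beta$ need not exist, and your minimizing-sequence remark does not supply the region your flow argument requires; the paper's ball-competitor argument sidesteps existence entirely (it also trivially matches the two profiles at volume $0$ since $I_x(0)=I_{\bar x}(0)=0$). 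Your treatment of the principal term --- Petersen--Wei for $\|\psi\|_{2p}$ plus H\"older to get $\beta^{(2p-1)/(2p)}$ --- is sound and is the same mechanism the paper uses, but it is applied to the wrong family of sets, and the residual error $f$ is asserted rather than constructed.
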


\begin{theorem}\label{them:1.1}
Let $(M^n,g)$ be a Riemmanian manifold, $p>\frac{n}{2}$ and  $k>0$ be given. Assume additionally that $\mathrm{diam}(M) < \frac{\pi}{2 \sqrt{k}}$. Then there exist an $\epsilon >0 $ such that if  
$\|\mathrm{Ric}_-^k \|_p < \epsilon,$
then for any $\beta \in (0, |M|)$, we have
\begin{align*}
    h_2(\beta,g) - h_2(\beta, g_k) &\leq \left( \frac{(n-1)(2p-1)}{2p-n} \|\mathrm{Ric}_-^k\|_p\right)^\frac{1}{2} \beta^{\frac{2p-1}{2p}}+ f(\beta,n,p,k,d,\|\mathrm{Ric}_-^k\|_p),
\end{align*}
    where $(\MM_k^n,g_k)$ is a complete simply connected space of constant curvature $k$ and $f\equiv 0$ when $\|\mathrm{Ric}_-^k \|_p=0$ recovering \eqref{eq:MJ}.
\end{theorem}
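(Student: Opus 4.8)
The plan is to adapt Ni and Wang's \cite{Ni} differential-inequality treatment of $h_2$ to the integral setting, letting the error be produced by the generalized Heintze--Karcher inequality of Petersen--Sprouse \cite{Petersen-Sprouse}. First, the hypotheses $\mathrm{diam}(M)<\tfrac{\pi}{2\sqrt k}$ and $\|\mathrm{Ric}_-^k\|_p<\epsilon$ force $M$ to be compact with $|M|$ finite and bounded in terms of $n,p,k,d$, by Aubry's \cite{Aubry07} integral Bonnet--Myers theorem; hence for every $\beta\in(0,|M|)$ there is an isoperimetric region $\Omega_\beta$ (Chapter~6 of \cite{Sakai}) with boundary $\Sigma_\beta$ smooth off a set of codimension $\ge 7$ and of constant mean curvature $H(\beta)$, and the first variation of area gives $h_2'(\beta)=H(\beta)$ (the H\"older continuity of $\beta\mapsto h_2(\beta,g)$ makes this precise in the barrier sense). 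Write $u:=h_2(\cdot,g)$ and $v:=h_2(\cdot,g_k)$, and recall that $u,v\sim n\omega_n^{1/n}\beta^{(n-1)/n}$ as $\beta\to 0$, so $u$ and $v$ agree to first order at the origin.

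The core of the argument is the comparison with the model ball. Applying the Petersen--Sprouse generalized Heintze--Karcher inequality \cite{Petersen-Sprouse} to $\Omega_\beta$ (or to its complement when $\beta$ is large), swept out by the normal exponential map of $\Sigma_\beta$, and using $H(\beta)=u'(\beta)$, one obtains
\begin{equation*}
 \beta=|\Omega_\beta|\le u(\beta)\int_0^\infty\Big(\mathrm{cs}_k(t)-\tfrac{u'(\beta)}{n-1}\,\mathrm{sn}_k(t)\Big)_+^{\,n-1}\,dt+\mathcal E(\beta),
\end{equation*}
where $\mathrm{sn}_k,\mathrm{cs}_k$ are the standard solutions of $y''+ky=0$ with $(\mathrm{sn}_k,\mathrm{sn}_k')(0)=(0,1)$, $(\mathrm{cs}_k,\mathrm{cs}_k')(0)=(1,0)$, and $\mathcal E(\beta)$ is the Petersen--Sprouse error, of the form $\big(\tfrac{(n-1)(2p-1)}{2p-n}\|\mathrm{Ric}_-^k\|_p\big)^{1/2}$ times a geometric factor built from $\mathrm{sn}_k$ on $[0,\mathrm{diam}(M)]$ and from $|\Omega_\beta|^{1-1/p}\le\beta^{1-1/p}$; the assumption $p>\tfrac n2$ is exactly what makes the weighted integrals in the underlying Petersen--Wei \cite{PetersenWei} Riccati comparison converge, and the smallness of $\|\mathrm{Ric}_-^k\|_p$ together with $\mathrm{diam}(M)<\tfrac{\pi}{2\sqrt k}$ keep these constants finite and uniform in $\beta$ when $k>0$, where the Chen--Wei \cite{chenwei} improved volume comparison also enters. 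The same inequality holds with $\mathcal E\equiv 0$ and with equality for the ball $B_k$ of volume $\beta$ in $(\MM_k^n,g_k)$, whose boundary area is $v(\beta)$ and whose mean curvature is $v'(\beta)$. Since the functional $H\mapsto\int_0^\infty(\mathrm{cs}_k(t)-\tfrac{H}{n-1}\mathrm{sn}_k(t))_+^{n-1}dt$ is strictly decreasing, subtracting the model equality from the inequality for $\Omega_\beta$ and rearranging produces a Ni--Wang-type differential inequality in the barrier sense, namely
\begin{equation*}
 u\,u''+\frac{(u')^2}{n-1}+(n-1)k\le\tilde{\mathcal E}(\beta),
\end{equation*}
with $\tilde{\mathcal E}$ built from $\mathcal E$, while $v$ solves the corresponding equality.

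Finally one integrates. Put $\varphi:=u^{n/(n-1)}$ and $\varphi_k:=v^{n/(n-1)}$; then $\varphi''\le -nk\,\varphi^{(2-n)/n}+\widehat{\mathcal E}$ while $\varphi_k$ solves the same equation with equality and with the same first-order behavior at $\beta=0$. An ODE/Gr\"onwall comparison --- in which the smallness $\|\mathrm{Ric}_-^k\|_p<\epsilon$ is used to control the nonlinear term on the set where $\varphi>\varphi_k$ and $k>0$ --- yields $\varphi(\beta)\le\varphi_k(\beta)+\int_0^\beta\!\!\int_0^s\widehat{\mathcal E}$, and passing back through the subadditive map $t\mapsto t^{(n-1)/n}$ bounds $u(\beta)-v(\beta)$ by the doubly integrated error to the power $\tfrac{n-1}{n}$. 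Arranging the constants so that the dominant contribution is precisely $\big(\tfrac{(n-1)(2p-1)}{2p-n}\|\mathrm{Ric}_-^k\|_p\big)^{1/2}\beta^{(2p-1)/(2p)}$ and collecting the remaining terms --- each a positive power of $\|\mathrm{Ric}_-^k\|_p$ times a constant depending on $n,p,k,d$ --- into $f$ gives the asserted bound; when $\|\mathrm{Ric}_-^k\|_p=0$ all errors vanish, $f\equiv 0$, and the scheme collapses to the Ni--Wang proof of \eqref{eq:MJ}.

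The main obstacle is the core comparison: making the Petersen--Sprouse inequality applicable to the isoperimetric hypersurface $\Sigma_\beta$, with its constant but a priori unknown mean curvature; tracking the error with the sharp constant $\tfrac{(n-1)(2p-1)}{2p-n}$; and verifying that the accompanying geometric factor is dominated by $\beta^{(2p-1)/(2p)}$ plus lower-order terms, uniformly in $\beta$ throughout the range $k>0$, $\mathrm{diam}(M)<\tfrac{\pi}{2\sqrt k}$, $\|\mathrm{Ric}_-^k\|_p<\epsilon$. A secondary technical point is the passage from the integral relation for $\beta\mapsto u(\beta)$ to the pointwise barrier-sense second-order inequality, followed by the nonlinear ODE comparison, both of which rely on the smallness hypothesis in the $k>0$ case.
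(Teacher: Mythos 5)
Your proposal takes a genuinely different route from the paper, and the route has a gap at its central step. The paper's proof never touches isoperimetric regions, constant-mean-curvature boundaries, or a differential inequality for $h_2$: it simply tests the infimum with geodesic balls, writing $h_2(\beta,g)-h_2(\beta,g_k)\le I_x(\beta,g)-I_{\bar x}(\beta,g_k)=:D(\beta)$ where $I_x(t,g)=|\partial B_x(r)|$ with $|B_x(r)|=t$, differentiates $D$ in $t$ to expose the mean curvature of geodesic spheres, bounds the excess $(m-\bar m)_+$ by H\"{o}lder's inequality plus the Petersen--Wei $L^{2p}$ mean curvature comparison (this is exactly where the constant $\frac{(n-1)(2p-1)}{2p-n}$ and the factor $\beta^{\frac{2p-1}{2p}}$ come from), and controls the remaining term $\bar m(r)-\bar m(\bar r)$ by comparing $r$ with $\bar r$ via the Chen--Wei volume comparison together with the elementary Propositions 2.1--2.2; that second term is the function $f$. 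The Heintze--Karcher/Petersen--Sprouse machinery you invoke is used in this paper only for the $h_1$ results (Theorems 1.4 and 1.5), i.e.\ for the L\'{e}vy--Gromov direction.

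The concrete gap: you claim that subtracting the model equality from the Heintze--Karcher inequality $\beta\le u(\beta)\int_0^\infty\bigl(\mathrm{cs}_k(t)-\tfrac{u'(\beta)}{n-1}\mathrm{sn}_k(t)\bigr)_+^{n-1}dt+\mathcal E(\beta)$ ``produces'' the second-order barrier inequality $uu''+\frac{(u')^2}{n-1}+(n-1)k\le\tilde{\mathcal E}$. It does not. Heintze--Karcher is a first-order relation among $\beta$, $u(\beta)$ and the mean curvature $H=u'(\beta)$; no rearrangement against the model identity can manufacture $u''$. The second-order inequality is the Bayle/Morgan--Johnson one and comes from the second variation of area, whose curvature term is the surface integral $\int_{\partial\Omega_\beta}\bigl(|A|^2+\mathrm{Ric}(\nu,\nu)\bigr)$; under an integral hypothesis this would require controlling $\mathrm{Ric}_-^k$ restricted to the a priori unknown hypersurface $\partial\Omega_\beta$, which is not controlled by the volume norm $\|\mathrm{Ric}_-^k\|_p$ --- a genuine obstruction, and one of the reasons the paper avoids this route. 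Finally, the closing step ``arranging the constants so that the dominant contribution is precisely $(\cdots)^{1/2}\beta^{(2p-1)/(2p)}$'' is asserted rather than derived: after twice integrating $\widehat{\mathcal E}$ and taking the power $\tfrac{n-1}{n}$ the error would not come out in the stated form. That specific error term is a fingerprint of the ball-testing argument (H\"{o}lder on $B_x(r_0)$ with exponents $\tfrac{1}{2p}$ and $\tfrac{2p-1}{2p}$), and it is not clear it can be recovered along your route.
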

\begin{remark}
We also notice that $f$ is an increasing function in $\beta$ and vanishes at $\beta =0$.
In the case of $k = 0$, we can write down $f$ explicitly: $$f=\left[(1+C(n,p,k,d) (\|\mathrm{Ric}_-^k\|_p )^\frac{1}{2})^{\frac{2p}{n}} - 1\right]n\left(\omega_n
    \right)^\frac{1}{n} \beta^{\frac{n-1}{n}}.$$ 
\end{remark}

We will prove these theorems by adapting the argument of Ni and Wang \cite{Ni} to incorporate the integral curvature condition. Specifically, we will investigate the discrepancy in measures of the boundaries of balls with fixed volumes, comparing those in our manifold with those in the model space. This comparison yields a natural upper bound for the $h_2$ functions. To calculate this difference, we will directly compute derivatives, resulting in the emergence of a mean curvature term. To control this term, we will apply the mean curvature comparison technique as outlined in Petersen and Wei \cite{PetersenWei}.

Define the usual comparison functions as follows,
$$sn_k(t) = \begin{cases}
    \frac{1}{\sqrt{k}}\sin (\sqrt{k}t), &k >0\\
    t & k =0\\
    \frac{1}{\sqrt{k}}\sinh (\sqrt{k}t) & k <0.
\end{cases}$$
Our next theorem involves the $h_1 $ isoperimetric profile function:
\begin{theorem}\label{thm:supsolution}
    Let $(M^n,g)$ be an $n$-dimensional Riemannian manifold, $p>\frac{n}{2}$ for any $k>0$ and $\alpha>1$, there is  $\delta=\delta(n,p,\alpha,k) >0$, such that if $\|\mathrm{Ric}_-^k\|_p\leq \delta$, then the isoperimetric profile function $h_1(\beta, g)$ is a positive super solution of
    \begin{equation}\label{eq:alpha-super}
        \alpha \psi \left(k + \left(\frac{\psi'}{n-1}\right)^2\right)^\frac{n-1}{2} = \frac{1}{\int_0^{d'} sn_k^{n-1}t \, dt},
    \end{equation}
    where $d' = \min\{\pi, \mathrm{diam}(M)\}$.
\end{theorem}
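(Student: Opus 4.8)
The strategy is to extend the Bayle--Rosales/Ni--Wang argument for the pointwise case, replacing the use of the lower Ricci bound on an isoperimetric boundary by the integral Heintze--Karcher inequality of Petersen and Sprouse \cite{Petersen-Sprouse}. Since $k>0$, once $\delta$ is small enough Aubry's integral Bonnet--Myers theorem \cite{Aubry07} makes $M$ compact with $\mathrm{diam}(M)<\infty$, so isoperimetric regions of every volume exist and $h_1$ is locally Lipschitz and strictly positive on $(0,1)$; in particular $h_1$ is differentiable a.e. Because \eqref{eq:alpha-super} is first order in $\psi$, it suffices to verify the supersolution inequality
\[ \alpha\, h_1(\beta_0)\left(k+\Big(\tfrac{h_1'(\beta_0)}{n-1}\Big)^2\right)^{\frac{n-1}{2}} \;\ge\; \frac{1}{\int_0^{d'} sn_k^{n-1}t\,dt} \]
at every point $\beta_0$ where $h_1$ is differentiable (at the remaining concave corners no smooth function touches $h_1$ from below, so there is nothing to check).

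Fix such a $\beta_0$ and let $\Om$ be an isoperimetric region with $|\Om|=\beta_0|M|$, with reduced boundary $\Sigma=\pa^*\Om$ (smooth away from a closed singular set of Hausdorff dimension $\le n-8$) of constant mean curvature $H$ with respect to the outer normal. Flowing $\Sigma$ by its parallel hypersurfaces produces, for small parameter, competitor regions whose normalized boundary area, written as a smooth function of the enclosed normalized volume, lies above $h_1$, equals $h_1(\beta_0)=|\Sigma|/|M|$ at $\beta_0$, and has derivative $H$ there; hence $h_1'(\beta_0)=H$. Thus the inequality to be proved becomes
\[ \alpha\,\frac{|\Sigma|}{|M|}\left(k+\Big(\tfrac{H}{n-1}\Big)^2\right)^{\frac{n-1}{2}} \;\ge\; \frac{1}{\int_0^{d'} sn_k^{n-1}t\,dt}. \]
Now apply the Petersen--Sprouse Heintze--Karcher inequality to $\Om$ and to $M\setminus\Om$ separately --- each has $\Sigma$ as boundary, with constant mean curvatures $H$ and $-H$, and the normal exponential maps sweeping them out have parameter bounded by $\mathrm{diam}(M)$, whence the truncation at $d'=\min\{\pi,\mathrm{diam}(M)\}$. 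Adding the two estimates bounds $|M|$ above by $|\Sigma|$ times a sum of model Jacobian integrals $\int(sn_k'(t)\mp\tfrac{H}{n-1}sn_k(t))_+^{n-1}dt$, plus an error $E=E(n,p,k,d',\|\mathrm{Ric}_-^k\|_p)$ with $E\to 0$ as $\|\mathrm{Ric}_-^k\|_p\to 0$. A direct computation using $sn_k''=-k\,sn_k$ and $(sn_k')^2+k\,sn_k^2=1$ identifies this sum of Jacobian integrals with $\big(k+(\tfrac{H}{n-1})^2\big)^{(n-1)/2}\int_0^{d'}sn_k^{n-1}t\,dt$ (the truncation being recorded by $d'$); rearranging and using the fixed slack $\alpha>1$ to absorb the small error proves the claim.

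The main obstacle is to make the error absorption uniform in $\beta_0$: as $\beta_0$ ranges over $(0,1)$ the mean curvature $H=H(\beta_0)$ runs over all of $\RR$ (with $H\to\pm\infty$ at the endpoints, where $\Om$ degenerates to a tiny almost-round ball or its complement), so the Heintze--Karcher error must be controlled independently of $H$. This is where Chen--Wei's improved volume comparison \cite{chenwei} and Aubry's diameter estimate enter: they pin $|M|$ and $\mathrm{diam}(M)$ to a fixed range depending only on $n,p,k$, and the endpoint asymptotics \eqref{eq:asymptotics} show that both sides of the target inequality tend to $|\SS^{n-1}|/|M|$ as $\beta_0\to 0,1$, so that the slack factor $\alpha-1$ does dominate the error once $\delta=\delta(n,p,\alpha,k)$ is chosen small. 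The remaining points --- the barrier calculus at non-differentiability points of $h_1$, and the treatment of the singular set of $\Sigma$ in the Heintze--Karcher argument --- are routine and follow the cited references.
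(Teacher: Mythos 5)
Your proposal follows essentially the same route as the paper: touch $h_1$ from below at $\beta_0$ (equivalently, work where $h_1'=H$, the constant mean curvature of an isoperimetric region, obtained from the first variation), apply the Petersen--Sprouse Heintze--Karcher inequality to $\Omega$ and to $M\setminus\Omega$, add the two bounds to control $\mathrm{vol}(M)$, and identify $\mathrm{area}(\partial\bar\Omega)=|\SS^{n-1}|\,sn_k^{n-1}(r_0)$ with $|\SS^{n-1}|\left(k+\left(\tfrac{H}{n-1}\right)^2\right)^{-\frac{n-1}{2}}$. The one place you diverge is your ``main obstacle'': the paper invokes Petersen--Sprouse in its multiplicative form $\mathrm{vol}(\Omega)\le\alpha\,\mathrm{area}(\partial\Omega)\,\mathrm{vol}(\bar\Omega)/\mathrm{area}(\partial\bar\Omega)$ with $\delta=\delta(n,p,\alpha,k)$ uniform over hypersurfaces, so no additive error depending on $H$ ever appears and your uniformity-in-$\beta_0$ discussion (endpoint asymptotics, Chen--Wei) is not needed.
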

This generalization extends Theorem 4.1 of Ni and Wang \cite{Ni} and retrieves it by taking the limit as $\alpha \to 1^+$. It's worth noting that we didn't assume compactness, as it follows from the Bonnet-Myers theorem generalization by Aubrey \cite{Aubry07}. By combining Petersen and Sprouse's estimate \cite{Petersen-Sprouse} with Ni and Wang's approach, we also establish the following result, which replicates the estimate proposed by Berard, Besson, and Gallot in their work \cite{Ni}. 
\begin{theorem}\label{thm:1.3}
     Let $(M^n,g)$ be an $n$-dimensional Riemannian manifold, $p>\frac{n}{2}$, for any $k >0$  and $\alpha >1$, there is  $\delta=\delta(n , p,\alpha, k)>0$, such that if $\|\mathrm{Ric}_-^k\|_p  \leq \delta$, then
    $$h_1(\beta, g) \geq  L h_1(\beta,g_k)-\epsilon(n,d',\alpha,k),$$
    where $d' = \min \{\frac{\pi}{\sqrt{k}}, \mathrm{diam}(M)\}$, $L= \left(\frac{\gamma_n}{\lambda_{n,d'}^k}\right)^\frac{1}{n}$,  $\gamma_n = \int_0^{\frac{\pi}{\sqrt{k}}} sn_k^{n-1}t \, dt$, $\lambda_{n,d'}^k = \int_0^{d'} sn_k^{n-1}t\, dt$,  $\epsilon(n,d',\alpha,k) = \frac{(\alpha -1)L}{\alpha}\frac{1}{\int_0^{\frac{\pi}{\sqrt{k}}} \sin (\sqrt{k} t) \,dt}$, and $g_k$ is the standard metric on $\SS_k^n$.
\end{theorem}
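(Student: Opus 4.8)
The plan is to combine Theorem~\ref{thm:supsolution} with an ODE comparison argument against the model profile. First, I would recall that $h_1(\beta,g_k)$ on the model sphere $\SS_k^n$ (or more precisely on the comparison density $sn_k^{n-1}$ on $[0,d']$ normalized to total mass one) is, by the work of Ni--Wang and Bayle--Rosales, an exact \emph{solution} of the ODE
\begin{equation*}
   \psi\left(k+\left(\frac{\psi'}{n-1}\right)^2\right)^{\frac{n-1}{2}} = \frac{1}{\int_0^{d'} sn_k^{n-1}t\,dt},
\end{equation*}
obtained by computing the mean curvature of geodesic spheres in the model. By Theorem~\ref{thm:supsolution}, for the given $\alpha>1$ and $k>0$ there is $\delta=\delta(n,p,\alpha,k)>0$ such that when $\|\mathrm{Ric}_-^k\|_p\le\delta$, $h_1(\beta,g)$ is a positive supersolution of the $\alpha$-scaled equation~\eqref{eq:alpha-super}. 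So I have a supersolution of one ODE and an explicit solution of a closely related (unscaled) ODE, both vanishing at the endpoints $\beta=0,1$ with the same Euclidean-type asymptotics dictated by \eqref{eq:asymptotics}. The goal is to turn the supersolution inequality into the pointwise lower bound $h_1(\beta,g)\ge L\,h_1(\beta,g_k)-\epsilon$.

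The key computational step is to check that the rescaled-and-shifted model function $\Phi(\beta) := L\,h_1(\beta,g_k)-\epsilon$ is a \emph{subsolution} of~\eqref{eq:alpha-super}, with the constants $L=(\gamma_n/\lambda_{n,d'}^k)^{1/n}$ and $\epsilon=\epsilon(n,d',\alpha,k)$ as defined in the statement. The factor $L$ accounts for the mismatch between the total mass $\gamma_n$ of the full model on $[0,\pi/\sqrt k]$ and the truncated mass $\lambda_{n,d'}^k$ appearing on the right-hand side of \eqref{eq:alpha-super}; scaling $\psi\mapsto L\psi$ multiplies the left-hand side by roughly $L^n$ (one factor of $L$ from $\psi$, $n-1$ from $(\psi'/(n-1))^{n-1}$ after absorbing the $k$ term appropriately), which is exactly the ratio needed. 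The subtraction of $\epsilon$ and the factor $\alpha>1$ are there to absorb the lower-order discrepancy: one estimates $(k+(\psi'/(n-1))^2)^{(n-1)/2}$ for $\psi=L h_1(\cdot,g_k)-\epsilon$ against the same quantity for $L h_1(\cdot,g_k)$, using that $h_1(\cdot,g_k)$ and its derivative are controlled, and one checks the inequality
\begin{equation*}
   \alpha\,\Phi\left(k+\left(\frac{\Phi'}{n-1}\right)^2\right)^{\frac{n-1}{2}} \le \frac{1}{\lambda_{n,d'}^k}
\end{equation*}
holds precisely because the defect $\alpha-1$ times the relevant maximum of $\Phi\cdot(\cdots)^{(n-1)/2}$ is bounded by $\epsilon$ times a positive quantity — this is where the specific form $\epsilon=\frac{(\alpha-1)L}{\alpha}\big/\int_0^{\pi/\sqrt k}\sin(\sqrt k t)\,dt$ is reverse-engineered.

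Then I would invoke the comparison principle for this first-order ODE: a positive supersolution $h_1(\cdot,g)$ and a subsolution $\Phi$ that agree in the appropriate sense at the boundary of $(0,1)$ (both have the same leading $\beta^{(n-1)/n}$ behavior as $\beta\to0$, after noting $L$ is chosen so the asymptotics match up to the $\epsilon$ shift, and symmetrically at $\beta\to1$) must satisfy $h_1(\cdot,g)\ge\Phi$ on all of $(0,1)$. This is the same comparison mechanism Ni and Wang and Bayle--Rosales use; concretely one argues by contradiction at a first interior crossing point, where $h_1(\cdot,g)=\Phi$ and $h_1(\cdot,g)'$ compares with $\Phi'$ the wrong way, contradicting that one is a super- and the other a subsolution of the monotone ODE. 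The main obstacle I anticipate is the boundary/endpoint analysis: making the comparison rigorous requires knowing that $h_1(\beta,g)$ does not dip below $\Phi$ near $\beta=0$ and $\beta=1$, which needs the sharp asymptotic \eqref{eq:asymptotics} together with a volume estimate (Chen--Wei's improved volume comparison under integral Ricci bounds) to control $|M|$ and hence the constant $n(\omega_n/|M|)^{1/n}$ relative to the model constant; the $\epsilon$ slack is designed to give room here, but verifying it cleanly is the delicate part. A secondary technical point is ensuring $h_1(\cdot,g)$, being only Hölder continuous a priori, can be treated as a supersolution in the viscosity/barrier sense, which is handled exactly as in \cite{Ni}.
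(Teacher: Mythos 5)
Your overall strategy---feed the touching function into Theorem \ref{thm:supsolution}, and play the resulting supersolution inequality against the explicitly scaled model profile $L\,h_1(\cdot,g_k)$---is the same as the paper's, and your subsolution computation is essentially right: writing $\phi=h_1(\cdot,g_k)$, one has $\phi\bigl(k+(\phi'/(n-1))^2\bigr)^{(n-1)/2}=1/\gamma_n$, and since $L\ge 1$,
\begin{equation*}
L\phi\left(k+\Bigl(\tfrac{L\phi'}{n-1}\Bigr)^2\right)^{\frac{n-1}{2}}\le L^n\phi\left(k+\Bigl(\tfrac{\phi'}{n-1}\Bigr)^2\right)^{\frac{n-1}{2}}=\frac{1}{\lambda_{n,d'}^k},
\end{equation*}
so subtracting $\epsilon\ge \frac{(\alpha-1)}{\alpha}L\max\phi$ does make $\Phi=L\phi-\epsilon$ an $\alpha$-subsolution of \eqref{eq:alpha-super} where it is positive. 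The value of $\epsilon$ in the statement is exactly this threshold, as you guessed.

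The genuine gap is in your comparison step. You propose to conclude at ``a first interior crossing point, where $h_1=\Phi$ and $h_1'$ compares with $\Phi'$ the wrong way, contradicting that one is a super- and the other a subsolution of the monotone ODE.'' But \eqref{eq:alpha-super} is \emph{not} monotone in $\psi'$: it depends on $\psi'$ only through $(\psi')^2$, so knowing $h_1'\le\Phi'$ (or $\ge$) at a crossing gives no usable comparison of the two left-hand sides, and moreover at a first crossing $\Phi$ is not a lower support function for $h_1$ (it lies above $h_1$ just past the crossing), so Theorem \ref{thm:supsolution} cannot even be invoked there. The contradiction has to be extracted at a point where the derivative terms \emph{cancel}, using monotonicity in the zeroth-order slot $\psi$ instead. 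The paper does this by minimizing the \emph{ratio} $f(\beta)=\bigl(h_1(\beta,g)+\epsilon\bigr)/\bigl(L\phi(\beta)\bigr)$: at an interior minimum $\beta_0$ with value $\lambda<1$, the function $\lambda L\phi-\epsilon$ touches $h_1$ from below with $\psi'(\beta_0)=\lambda L\phi'(\beta_0)$, and after comparing with the model equation the common factor $\bigl(k+(L\phi'/(n-1))^2\bigr)^{(n-1)/2}$ cancels, leaving $\alpha(L\phi-\epsilon)>L\phi$, i.e. $\epsilon<\frac{(\alpha-1)L}{\alpha}\phi(\beta_0)$, contradicting the definition of $\epsilon$. (Minimizing the difference $h_1-\Phi$ would also work, since the touching function then has derivative exactly $\Phi'$.) Finally, your worry about the endpoint analysis is misplaced: no matching of asymptotics or Chen--Wei volume control of $|M|$ is needed, because the numerator of $f$ is bounded below by $\epsilon>0$ while $\phi\to 0$ at $\beta=0,1$ by \eqref{eq:asymptotics}, so $f\to+\infty$ at both endpoints and the minimum is automatically interior; in your difference formulation the same triviality appears as $\Phi\to-\epsilon<0\le h_1$ near the endpoints.
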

Notice that since $L\geq 1 $, this improves the classical Levy-Gromov isoperimetric inequality, the first inequality in \eqref{eq:Ineq}.
Let us consider a complete Riemannian manifold $(M,g)$ with $\mathrm{Ric} \geq (n-1)k$ for $k>0$. Since $\|\mathrm{Ric}_-^k\|_p = 0$, we know that
     $$h_1(\beta, g) \geq  Lh_1(\beta,g_k)-\epsilon(n,d',\alpha,k)$$
     for all $\alpha >1$.
      Since $\lim\limits_{\alpha \to 1^+}\epsilon(n,d',\alpha,k) = 0$, we get the following corollary.
\begin{corollary}
Let $(M^n,g)$ be a complete Riemannian manifold with $\mathrm{Ric} \geq (n-1)k$ for $k >0$. Then for any $\beta \in (0,1)$
     $$h_1(\beta,g) \geq L h_1(\beta,g_k),$$
     which is Bérard–Besson–Gallot's estimate.
\end{corollary}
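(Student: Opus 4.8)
The plan is to deduce the corollary directly from Theorem \ref{thm:1.3} by specializing to the vanishing integral-excess case and then passing to the limit in the auxiliary parameter $\alpha$. First I would observe that the pointwise hypothesis $\mathrm{Ric} \geq (n-1)k$ forces $\mathrm{Ric}_-^k(x) = \max\{0,(n-1)k-\rho(x)\} = 0$ for every $x \in M$, and hence $\|\mathrm{Ric}_-^k\|_p = 0$. (One also notes for context that $k>0$ together with Bonnet--Myers forces $M$ to be compact with $\mathrm{diam}(M) \le \pi/\sqrt k$, so the statement is non-vacuous and $d' = \min\{\pi/\sqrt k,\mathrm{diam}(M)\}$ is well defined.)

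Next, fix $\beta \in (0,1)$ and an arbitrary $\alpha > 1$. Since $0 = \|\mathrm{Ric}_-^k\|_p \leq \delta(n,p,\alpha,k)$ holds automatically for whatever $\delta$ Theorem \ref{thm:1.3} produces, that theorem applies and yields
\begin{equation*}
    h_1(\beta,g) \geq L\, h_1(\beta,g_k) - \epsilon(n,d',\alpha,k),
\end{equation*}
where the key point is that the factor $L = (\gamma_n/\lambda_{n,d'}^k)^{1/n}$ and the quantity $d'$ do not depend on $\alpha$, whereas $\epsilon(n,d',\alpha,k) = \tfrac{(\alpha-1)L}{\alpha}\,\big(\int_0^{\pi/\sqrt k}\sin(\sqrt k\, t)\,dt\big)^{-1} \to 0$ as $\alpha \to 1^+$. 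Therefore, letting $\alpha \to 1^+$ in the displayed inequality (the left-hand side and $L\,h_1(\beta,g_k)$ are both independent of $\alpha$) gives $h_1(\beta,g) \geq L\, h_1(\beta,g_k)$, as claimed.

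Finally I would record why this is the Bérard--Besson--Gallot estimate: since $d' \leq \pi/\sqrt k$ and $sn_k^{n-1} \geq 0$ on $[0,\pi/\sqrt k]$, we have $\lambda_{n,d'}^k = \int_0^{d'} sn_k^{n-1}t\,dt \leq \int_0^{\pi/\sqrt k} sn_k^{n-1}t\,dt = \gamma_n$, so $L \geq 1$, with strict inequality precisely when $\mathrm{diam}(M) < \pi/\sqrt k$; thus the bound refines the Levy--Gromov inequality $h_1(\beta,g) \geq h_1(\beta,g_k)$ exactly in the way BBG's theorem does. I do not expect any genuine obstacle here; the only subtlety worth flagging is that $\delta$ in Theorem \ref{thm:1.3} degenerates as $\alpha \to 1^+$, but this is harmless because the hypothesis is met trivially for every $\alpha>1$ before the limit is taken.
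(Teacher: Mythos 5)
Your proposal is correct and is essentially identical to the paper's own argument: the authors likewise note that $\mathrm{Ric}\geq (n-1)k$ forces $\|\mathrm{Ric}_-^k\|_p=0$, so Theorem \ref{thm:1.3} applies for every $\alpha>1$, and then let $\alpha\to 1^+$ using $\lim_{\alpha\to 1^+}\epsilon(n,d',\alpha,k)=0$. Your added remarks on compactness, the harmlessness of $\delta$ degenerating, and $L\geq 1$ are accurate but not needed beyond what the paper already records.
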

The paper is structured as follows: In Section 2, we concentrate on the $h_2$ isoperimetric profile function. Initially, we establish Theorem \ref{them:1.1negative} along with Theorem \ref{them:1.1}, followed by an exploration of its relative isoperimetric counterpart. In Section 3, our focus shifts to the $h_1$ profile function, where we prove Theorem \ref{thm:supsolution} and Theorem \ref{thm:1.3}.

\paragraph{Acknowledgements:} We are deeply grateful to our advisor Guofang Wei for her guidance and for introducing us to the Ni and Wang's paper \cite{Ni}. Her support and fruitful discussions were instrumental in the success of this project.

\section{Comparisons for $h_2$ isoperimetric profile} 
In this section we prove  the two results involving the $h_2$ isoperimetric profile function. We then move to the relative version of our $h_2$ comparison in the same spirit as \cite{bayle2}. 

\subsection{Proof of Theorem \ref{them:1.1negative} and Theorem \ref{them:1.1}}
We begin with a simple observation on positive increasing functions. This will be used, in the positive $k>0$ case when comparing the radius of a ball of the same size on a given manifold and model space. For the $k=0$ we will avoid this and instead use a direct computation. For the negative $k<0$ scenario we will need Proposition \ref{prop2}. 

\begin{proposition}\label{prop:1}
Let $A \in C^\infty\left([0,2r]\right)$ and $A$ be positive on $(0,2r)$. Assume that it is strictly increasing  on the subset $(0,r]\subseteq [0,2r]$. Let $ 1 \leq \alpha \leq 2$ and
assume that for all $s \in [0,r]$, we have $A(r+s) \geq A(r-s)$. If
$$\int_0^{\bar{r}} A(s) ds \leq \alpha \int_0^r A(s) ds,$$
then $\bar{r} \leq \alpha r$.
\end{proposition}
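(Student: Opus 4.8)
The plan is to argue by contraposition. Set $F(t) = \int_0^t A(s)\,ds$; since $A>0$ on $(0,2r)$, the function $F$ is continuous and strictly increasing on $[0,2r]$. Hence it suffices to establish the single inequality
$$F(\alpha r) \ge \alpha F(r),$$
for then $\bar r > \alpha r$ would force $F(\bar r) > F(\alpha r) \ge \alpha F(r)$, contradicting the hypothesis $F(\bar r) \le \alpha F(r)$; thus $\bar r \le \alpha r$. (Here we use $\alpha \le 2$ to guarantee $\alpha r \in [0,2r]$, so that $F(\alpha r)$ is defined.)

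To prove $F(\alpha r) \ge \alpha F(r)$, write $\lambda = \alpha - 1 \in [0,1]$, so the claim is equivalent to $\int_r^{\alpha r} A(s)\,ds \ge \lambda\, F(r)$. Substituting $s = r+u$ and invoking the reflection hypothesis $A(r+u) \ge A(r-u)$ — which applies because $u$ ranges over $[0,\lambda r] \subseteq [0,r]$, again using $\lambda \le 1$ — we get
$$\int_r^{\alpha r} A(s)\,ds = \int_0^{\lambda r} A(r+u)\,du \ge \int_0^{\lambda r} A(r-u)\,du = \int_{(1-\lambda)r}^{r} A(v)\,dv.$$
It then remains to show $\int_{(1-\lambda)r}^{r} A(v)\,dv \ge \lambda \int_0^r A(v)\,dv$. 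This follows purely from the monotonicity of $A$ on $(0,r]$: the average of $A$ over the right subinterval $[(1-\lambda)r,r]$, which has length $\lambda r$, is at least $A((1-\lambda)r)$, which in turn is at least the average of $A$ over $[0,(1-\lambda)r]$; splitting $\int_0^r A = \int_0^{(1-\lambda)r} A + \int_{(1-\lambda)r}^{r} A$ and combining these two mean comparisons yields the bound. Chaining the inequalities gives $F(\alpha r) = F(r) + \int_r^{\alpha r} A \ge F(r) + \lambda F(r) = \alpha F(r)$.

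No real obstacle is anticipated here: the argument is elementary, and the role of the standing hypothesis $1 \le \alpha \le 2$ is precisely to keep every substitution inside the interval $[0,2r]$ on which $A$ is given (making $\lambda r \le r$ for the reflection step and $\alpha r \le 2r$ for $F(\alpha r)$ to make sense). The only points needing mild care are the degenerate cases $\alpha = 1$ (where the inequality is an equality and strict monotonicity of $F$ does the work) and $\alpha = 2$ (where $(1-\lambda)r = 0$ and the left subinterval is empty), together with the trivial case $r = 0$; and one should make sure to cite strict monotonicity of $F$ — not merely monotonicity — at the contraposition step.
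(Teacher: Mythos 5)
Your proof is correct and follows essentially the same route as the paper's: both arguments reduce to the key inequality $\int_r^{\alpha r} A \ge (\alpha-1)\int_0^r A$, obtained by reflecting via $A(r+u)\ge A(r-u)$ and then invoking the monotonicity of $A$ on $(0,r]$. The only differences are cosmetic — you argue by contraposition and use an averaging comparison for the monotonicity step, whereas the paper argues by contradiction and uses the rescaling substitution $t=(\alpha-1)s$.
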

\begin{proof}
When $ \alpha =1$, since $A$ is positive, $\int_0^{\bar{r}} A(s) ds \leq \int_0^r A(s) ds$ implies $\bar{r} \leq r$. Now, consider the case $1 <\alpha \leq 2$.
Assume $\bar{r} > \alpha r$ to get a contradiction.
Then
$$\int_0^{\bar{r}} A(s) ds > \int_0^{\alpha r} A(s) ds = \int_0^r A(s) ds + \int_r^{\alpha r} A(s) ds.$$
Let $B(s) = A (r-s)$ on $[0,r]$. Then $B'(s) = - A' (r-s) < 0$. That is, $B$ is strictly decreasing. Since $1 < \alpha \leq 2$, we have $( \alpha -1 ) s < s$ and so $B(s) < B (( \alpha -1) s)$. Thus,
$$(\alpha -1 ) B (s) < (\alpha -1) B((\alpha -1) s )$$
for $0 < s < r$.
By integrating it, we have
\begin{align*}
    (\alpha -1 ) \int_0^r B(s) ds  
    & < \int_0^r (\alpha -1 ) B((\alpha-1)s) ds  = \int_0^{(\alpha -1 ) r} B(t) dt \\
    & = \int_0^{(\alpha -1 ) r} A (r-t) dt \leq \int_0^{(\alpha -1 ) r} A(r+t) dt = \int_r^{\alpha r} A(s) ds.
\end{align*}
Thus,
\begin{align*}
    \int_0^{\bar{r}} A(s) ds & > \int_0^r A(s) ds + (\alpha -1) \int_0^r B(s) ds\\
    & = \int_0^r A (s) ds + (\alpha -1 ) \int_0^r A(r-s) ds = \alpha \int_0^r A (s) ds,           
\end{align*}
which is a contradiction to our assumption. Thus, $ \bar{r} \leq \alpha r$.
\end{proof}
The next proposition will mirror the previous one but for the scenario where $k<0$. We will explicitly compute this in the space form of constant sectional curvature $k=-1$, with the general case following straightforwardly. Notably, since $\sinh(s)$ is an increasing function everywhere, we don't need the assumption $\alpha \leq 2$ as in Proposition \ref{prop:1}. 
\begin{proposition}\label{prop2}
    Denote $A(r)=\sinh^{n-1}(r)$ with $r\in [0,+\infty)$, let $\alpha \geq 1$ and assume that 
    $$\int_0^{\bar{r}} A(s) ds \leq \alpha \int_0^r A(s) ds.$$
    Then we have $\bar{r} \leq \alpha^{\frac{1}{n}} r$.
\end{proposition}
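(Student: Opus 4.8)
The plan is to reduce the statement to a one–variable scaling monotonicity. Set $F(t) = \int_0^t \sinh^{n-1}(s)\,ds$, so $F$ is smooth, $F(0) = 0$, and $F$ is strictly increasing on $[0,\infty)$ since $A = \sinh^{n-1}$ is positive on $(0,\infty)$. The hypothesis reads $F(\bar r) \leq \alpha F(r)$, and I claim the conclusion follows at once by a contradiction argument once we establish
\[
F(\alpha^{1/n} r) \geq \alpha F(r) \qquad \text{for all } r \geq 0,\ \alpha \geq 1 .
\]
Indeed, if $\bar r > \alpha^{1/n} r$, then strict monotonicity of $F$ gives $F(\bar r) > F(\alpha^{1/n} r) \geq \alpha F(r)$, contradicting the hypothesis; hence $\bar r \leq \alpha^{1/n} r$. (The case $r = 0$ is immediate, as the hypothesis then forces $\bar r = 0$.)

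To prove the displayed key inequality I would show that, for each fixed $r > 0$, the function $\lambda \mapsto \phi(\lambda) := \lambda^{-n} F(\lambda r)$ is non-decreasing on $[1,\infty)$; evaluating at $\lambda = \alpha^{1/n} \geq 1$ and comparing with $\lambda = 1$ gives exactly $F(\alpha^{1/n} r) \geq \alpha F(r)$. Differentiating,
\[
\phi'(\lambda) = \lambda^{-n-1}\Big( \lambda r \,\sinh^{n-1}(\lambda r) - n\, F(\lambda r) \Big),
\]
so it suffices to prove $u\,\sinh^{n-1}(u) \geq n \int_0^u \sinh^{n-1}(s)\,ds$ for all $u \geq 0$ and apply it with $u = \lambda r$.

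This last inequality is handled by one more differentiation: both sides vanish at $u = 0$, and the derivative of the difference is
\[
\sinh^{n-1}(u) + (n-1)\,u\,\sinh^{n-2}(u)\cosh(u) - n\,\sinh^{n-1}(u) = (n-1)\sinh^{n-2}(u)\big( u\cosh(u) - \sinh(u)\big) \geq 0,
\]
since $u\cosh(u) - \sinh(u)$ vanishes at $u = 0$ and has derivative $u\sinh(u) \geq 0$. Hence the difference is non-decreasing, therefore non-negative, which closes the argument.

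I expect the only real content to be the identification of the scale-covariant quantity $\lambda^{-n}F(\lambda r)$ — the curved analogue of the Euclidean identity $\int_0^r s^{n-1}\,ds = r^n/n$, which is precisely what makes Proposition \ref{prop2} sharp in the flat limit; everything after that is routine calculus, and the hypothesis $\alpha \geq 1$ enters only to keep us on $\lambda \geq 1$. I would also note in passing that the general constant-curvature case $k < 0$ follows by the substitution $u = \sqrt{-k}\,s$, which turns $sn_k^{n-1}$ into a constant multiple of $\sinh^{n-1}(\sqrt{-k}\,\cdot)$ and rescales $\bar r$ and $r$ by the same positive factor.
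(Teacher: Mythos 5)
Your argument is correct and is essentially the paper's own proof: the key inequality $F(\alpha^{1/n}r)\geq \alpha F(r)$ is exactly the statement that $t\mapsto t^{-n}\int_0^t\sinh^{n-1}(s)\,ds$ is non-decreasing, which is the monotone quantity the paper introduces (and whose derivative computation, reducing to $u\cosh u\geq\sinh u$, the paper leaves implicit while you carry it out). The only cosmetic difference is that you run the comparison through the scaling variable $\lambda$ rather than comparing $f(r)$ with $f(\bar r)$ directly.
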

\begin{proof}
    Defining the function $f(r)=\frac{\int_0^{r} A(s) ds}{r^n}$, our assumption can be rewritten as 
    $$f(\bar r) \leq \alpha f(r) \frac{r^n}{\bar r ^n}.$$ 
    By directly computing the derivative, we see that  $f$ is strictly increasing. Consider the case where $0<r<\bar r$, we conclude that $f(r) < f(\bar r) \leq \alpha f(r) \frac{r^n}{\bar r ^n}$ implying $1<\alpha \frac{r^n}{\bar r ^n}$, which leads to the conclusion $\bar r ^n < \alpha r^n$. The case $\bar r \leq r$ follows easily, since $\alpha \geq 1$, resulting in $\bar r ^n \leq \alpha r^n$ and thus the desired conclusion of $\bar r \leq \alpha^\frac{1}{n}r$
\end{proof}
\begin{remark}
    In the case $k = 0$, let $A(s)  = s^{n-1}$. By direct computation, $\int_0^{\bar r} A(s) \, ds \leq \alpha \int_0^r A(s) ds$ implies that  $\bar r \leq \alpha^\frac{1}{n} r$.
\end{remark}
We are now ready to prove Theorem \ref{them:1.1}. The proof strategy entails considering the difference between the measures of the boundaries of balls in our manifold and those in the model space. We choose this as it offers a straightforward upper bound for the difference of the $h_2$ functions. To estimate this difference, we will explicitly take the derivatives, naturally leading to the emergence of a mean curvature term. We will adopt a strategy similar to that of Petersen and Wei \cite{PetersenWei} to control such a term, in conjunction with their volume comparison technique for integral curvature.

\begin{proof}[Proof of Theorem \ref{them:1.1negative} and Theorem \ref{them:1.1}] 
Fix a point $x \in M$ and $\beta \in ( 0 , |M|)$. Let us define a function $I_x (t,g) = |\partial B_x(r)|$, where $r$ is defined as $|B_x(r)| = t$. Then we have
$$h_2 (\beta, g) =\inf\limits_{|\Omega|=\beta} |\partial \Omega| \leq I_x(\beta, g).$$
Note that $h_2 (\beta, g_k) = I_{\bar{x}}(\beta, g_k)$ for $\bar{x} \in \MM_k$. By the above two inequalities, we have
$$h_2 (\beta, g ) - h_2(\beta, g_k) \leq I_x (\beta ,g ) - I_{\bar{x}}(\beta, g_k).$$

Let $D(t) = I_x (t, g) - I_{\bar{x}} (t, g_k)$. We now want to find an upper bound of $D(\beta)$.
Recall that $I_x(t,g) = |\partial B_x(r)|$ with $|B_x(r)|=t$. Then,
$$\frac{d}{dr} |\partial B_x(r)| = \frac{d}{dr} \int_{\SS^{n-1}} A(r,\theta) d \theta = \int_{\SS^{n-1}}m(r,\theta) A(r,\theta) d \theta $$
and
$$\frac{dt}{dr} = \frac{d}{dr}|B_x(r)| =\frac{d}{dr} \int_0^r\int_{\SS^{n-1}} A(s,\theta) d \theta ds = \int_{\SS^{n-1}}A ( r, \theta) d \theta = |\partial B_x(r)|,$$
where $m(r,\theta)$ is the mean curvature and $A(s,\theta) d\theta ds$ is a volume form on manifold $M$ when we use a geodesic polar coordinates centered at $x$.
Thus,
$$I_x'(t, g)= \frac{d}{dt}|\partial B_x(r)| = \frac{d}{dr}|\partial B_x(r)| \frac{dr}{d t} =  \frac{\int_{\SS^{n-1}} m(r,\theta) A(r,\theta ) d \theta}{|\partial B_x(r)|}$$
and
$$I_{\bar{x}}'(t,g_k) =\frac{\int_{\SS^{n-1}} \bar{m}(\bar r) \bar{A}(\bar r) d \theta}{|\partial B_{\bar{x}}(\bar r)|}=\bar{m}(\bar{r}),$$
where $B_{\bar{x}}(\bar{r})$ is the ball on $\MM_k$ such that $|B_{\bar{x}}(\bar{r})| = t$.
Then
\begin{align*}
    D'(t) &= I_x'(t,g) - I_{\bar{x}}'(t,g_k)\\
    &=\frac{\int_{\SS^{n-1}} m(r,\theta) A(r,\theta ) d \theta}{|\partial B_x(r)|} -  \bar{m}(\bar{r})\\
    &= \frac{\int_{\SS^{n-1}} m(r,\theta) A(r,\theta ) d \theta}{|\partial B_x(r)|} - \frac{\int_{\SS^{n-1}} \bar{m}(\bar{r}) A(r,\theta ) d \theta}{|\partial B_x(r)|}\\
    & = \frac{\int_{\SS^{n-1}} [m(r,\theta)-\bar{m}(r)] A(r,\theta ) d \theta}{|\partial B_x(r)|} + \frac{\int_{\SS^{n-1}} [\bar{m}(r)-\bar{m}(\bar{r})] A(r,\theta ) d \theta}{|\partial B_x(r)|}\\
    & = \frac{\int_{\SS^{n-1}} [m(r,\theta)-\bar{m}(r)] A(r,\theta ) d \theta}{|\partial B_x(r)|} +  [\bar{m}(r)-\bar{m}(\bar{r})].
\end{align*}
Let $m_+^k(r,\theta):= (m(r,\theta) -\bar{m}(r))_+$.
Then
\begin{align*}
   \frac{\int_{\SS^{n-1}} [m(r,\theta)-\bar{m}(r)] A(r,\theta ) d \theta}{|\partial B_x(r)|} \leq\frac{\int_{\SS^{n-1}} m_+^k(r,\theta) A(r,\theta ) d \theta}{|\partial B_x(r)|}.
\end{align*}
Since $I_x(0,g) = 0$ and $I_{\bar{x}}(0,g_k)=0$, we know that $D(0) = 0$. Thus,
\begin{align*}
    D(\beta)  = \int_0^\beta D'(t) d t 
    &\leq \int_0^\beta \frac{\int_{\SS^{n-1}} m_+^k(r,\theta) A(r,\theta ) d \theta}{|\partial B_x(r)|} dt + \int_0^\beta (\bar{m}(r) - \bar{m}(\bar{r})) d t\\
    &\leq \int_0^{r_0} \int_{\SS^{n-1}} m_+^k(r,\theta) A(r,\theta ) d \theta dr + \int_0^{\beta} (\bar{m}(r) - \bar{m}(\bar{r})) d t,
\end{align*}
where $r_0$ is determined by $\beta = |B_x(r_0)|$.
Using H\"{o}lder's inequality, we have
\begin{align*}
    &\int_0^{r_0}\int_{\SS^{n-1}} m_+^k (r , \theta) A(r,\theta ) d \theta dr\\
    &\leq \left( \int_0^{r_0}\int_{\SS^{n-1}} (m_+^k(r, \theta) )^{2p} A(r,\theta ) d \theta dr\right)^\frac{1}{2p} \left( \int_0^{r_0}\int_{\SS^{n-1}}  A(r,\theta ) d \theta dr\right)^\frac{2p-1}{2p}\\
    & =\left( \int_{B_x(r_0)} (m_+^k)^{2p} d vol\right)^\frac{1}{2p}|B_x(r_0)|^\frac{2p-1}{2p}
\end{align*}
We now want to apply the mean curvature comparison under integral Ricci curvature condition \cite{PetersenWei}:
$$\|m_+^k\|_{2p}(r) =\sup_{y \in M}\left(\int_{B_y(r)}(m_+^k)^{2p} d vol\right)^\frac{1}{2p}\leq\left( \frac{(n-1)(2p-1)}{2p-n} \|\mathrm{Ric}_-^k\|_p(r)\right)^\frac{1}{2}. $$
Combining the above inequalities, we have
\begin{align*}
    D(\beta) &\leq  \|m_+^k\|_{2p}(r_0)|B_{x}(r_0)|^\frac{2p-1}{2p} + \int_0^{\beta} (\bar{m}(r) - \bar{m}(\bar{r})) d t\\
    &\leq \left( \frac{(n-1)(2p-1)}{2p-n} \|\mathrm{Ric}_-^k\|_p(r_0)\right)^\frac{1}{2} \beta^{\frac{2p-1}{2p}} + \int_0^{\beta} (\bar{m}(r) - \bar{m}(\bar{r})) dt.
\end{align*}
Finally,
\begin{align}\label{mid-result}
    h_2 (\beta, g ) - h_2( \beta, g_k) \nonumber&\leq I_x (\beta ,g ) - I_{\bar{x}}(\beta, g_k)
    \nonumber\\
    &\leq \left( \frac{(n-1)(2p-1)}{2p-n} \|\mathrm{Ric}_-^k\|_p\right)^\frac{1}{2} \beta^{\frac{2p-1}{2p}} + \int_0^{\beta} (\bar{m}(r) - \bar{m}(\bar{r})) d t.
\end{align}

 In order to estimate the second term in the last inequality, we want to compare $r$ and $\bar r$ by using the volume comparison. Recall the improved volume comparison theorem with integral curvature condition as in \cite{chenwei}:
$$|B_{\bar{x}}(\bar{r})|=|B_x(r)| \leq (1+C(n,p,k,r) (\|\mathrm{Ric}_-^k\|_p (r))^\frac{1}{2})^{2p} |B_{\bar{x}} (r)|,$$
where the constant $C(n,p,k,r)$ is
\begin{align*}
    C(n,p,k,r)=\left(  \frac{(n-1)(2p-1)}{2p-n}  \right)^\frac{1}{2}\int_0^r \left(\frac{1}{|B_{\bar{x}} (r)|} \right)^\frac{1}{2p}dt.
\end{align*}
When $k<0$, this constant is bounded for all $r$, eliminating the need for compactness in this scenario. We denote $C(n,p,k,d)$ as the limit of such a constant as $r$ approaches infinity. Conversely, for the case when $k=0$, compactness becomes necessary as $C$ increases with respect to $r$. In the positive case of $k>0$, we already assume a stronger restriction on the diameter to apply the mean curvature and volume comparison.

Notice that $\|\mathrm{Ric}_-^k\|_p (r)$ is also an increasing function of $r$, we can then conclude
$$|B_{\bar{x}}(\bar{r})| \leq (1+C(n,p,k,d) (\|\mathrm{Ric}_-^k\|_p )^\frac{1}{2})^{2p} |B_{\bar{x}} (r)|,$$
where $d = \mathrm{diam}(M)$.
Now we get
\begin{align}\label{eq:prop}
    \int_0^{\bar r} \bar A(s) ds \leq (1+C(n,p,k,d) (\|\mathrm{Ric}_-^k\|_p )^\frac{1}{2})^{2p} \int_0^{r} \bar A(s) ds.
\end{align}
Now depending on the sign of $k$ we either apply Proposition 2.1 or Proposition 2.2. Define $q=2p$ if $k>0$ and $q=\frac{2p}{n}$ if $k\leq 0$. Consider first the positive $k>0$. Then there is $\epsilon>0$ such that $\|\mathrm{Ric}_-^k\|_p < \epsilon$ implies $(1+C(n,p,k,d) (\|\mathrm{Ric}_-^k\|_p )^\frac{1}{2})^{2p} \leq 2$.
Since $\bar A (s)$ is positive increasing function on $(0,r]$, we can use Proposition \ref{prop:1}. Similarly for negative $k<0$ we use Proposition 2.2 and this time we don't require $\|\mathrm{Ric}_-^k\|_p$ to be small and obtain  
\begin{align}\label{ineq:r}
    \bar{r} \leq (1+C(n,p,k,d) (\|\mathrm{Ric}_-^k\|_p )^\frac{1}{2})^{q} r.
\end{align}
For $k=0$ the above inequality follows from a direct computation.

Since the mean curvature $\bar{m}$ on model space is decreasing in $r$, we have
$$\bar{m}(r) \leq \bar{m}\left(\frac{\bar{r}}{(1+C(n,p,k,d) (\|\mathrm{Ric}_-^k\|_p )^\frac{1}{2})^{q}}\right).$$
Thus,
$$\int_0^{\beta} (\bar{m}(r) - \bar{m}(\bar{r})) d t \leq \int_0^\beta\left(\bar{m}\left(\frac{\bar{r}}{(1+C(n,p,k,d) (\|\mathrm{Ric}_-^k\|_p )^\frac{1}{2})^{q}}\right) - \bar{m}(\bar{r} )\right) dt.$$
Let
$$f(\beta,n,p,k,d,\|\mathrm{Ric}_-^k\|_p): = \int_0^\beta\left(\bar{m}\left(\frac{\bar{r}}{(1+C(n,p,k,d) (\|\mathrm{Ric}_-^k\|_p )^\frac{1}{2})^{q}}\right) - \bar{m}(\bar{r} )\right) dt.$$
We can now conclude that
$$h_2(\beta,g) - h_2(\beta,\bar{g})  \leq \left( \frac{(n-1)(2p-1)}{2p-n} \|\mathrm{Ric}_-^k\|_p\right)^\frac{1}{2} \beta^{\frac{2p-1}{2p}} + f(\beta, n , p , k , d, \|\mathrm{Ric}_-^k\|_p).$$
\end{proof}

\subsection{Relative isoperimetric profile function comparison}
 We now move on to discuss the relative isoperimetric comparison for convex bodies. The difference here is that the perimeter is considered \textit{relative} to the convex body $\Omega$ which is a convex smooth domain with compact closure in $(M^n,g)$. If a hypersurface splits $\Omega$ into two sets, the relative perimeter of each of these sets is the surface area of the hypersurface. In other words, pieces from the boundary of $\Omega$ don't contribute to the relative perimeter. 
 
 The differential inequality for a relative isoperimetric profile function on a convex domain with pointwise Ricci curvature lower bounds was obtained by Bayle and Rosales \cite{bayle2}, following ideas from Bayle \cite{bayle1} for manifolds without boundary. Rather than following Bayle and Rosales' approach, we will modify the method we used in Theorem \ref{them:1.1negative} and Theorem \ref{them:1.1} in order to get a relative isoperimetric profile function comparison with integral Ricci curvature. For a more extensive discussion of the relative isoperimetric problem, see for example Chapter 9 of \cite{ritorebook}.
Define the relative isoperimetric profile function with respect to a convex body $\Omega$ as
$$h_2^\Om(\beta,g) = \inf \{|\partial D \cap \Om| \, | \, D \text{ is a smooth domain in } \Om \text{ with } |D \cap \Omega| = \beta\}.$$
Here we emphasize that $\partial D \cap \partial \Omega$ could be not empty.
Denote by $\mathbb{H}_k^n$ an half space in the simply connected space form $(\MM_k, g_k)$ of constant sectional curvature $k$, and denote with $h_2^{\mathbb{H}_k^n}(\beta,g_k)$ its relative isoperimetric function with respect to $\mathbb{H}_k^n$.
\begin{theorem}\label{thm:relative1}
Let $(M^n,g)$ be a Riemmanian manifold and $\Om$ be a convex body on $M$. Let $p>\frac{n}{2}$ and $k > 0$ be given. Assume that $\mathrm{diam}(M) =d < \frac{\pi}{2 \sqrt{k}}$. Then there is $\epsilon>0$ such that if $(M^n,g)$ satisfies
$\|\mathrm{Ric}_-^k \|_p < \epsilon,$
then for any $\beta \in (0, |\Omega|)$, we have
\begin{align*}
    h_2^\Om(\beta,g) - h_2^{\mathbb{H}_k^n}(\beta, g_k) &\leq \left( \frac{(n-1)(2p-1)}{2p-n} \|\mathrm{Ric}_-^k\|_p\right)^\frac{1}{2} \beta^{\frac{2p-1}{2p}}+ \tilde{f}(\beta,n,p,k,d,\|\mathrm{Ric}_-^k\|_p).
\end{align*}
\end{theorem}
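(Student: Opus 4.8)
The plan is to mirror the proof of Theorem \ref{them:1.1} and Theorem \ref{them:1.1negative}, replacing geodesic balls centered at an interior point by \emph{half-balls} centered at a boundary point of the convex body $\Omega$, and the model comparison object by half-balls in $\mathbb{H}_k^n$. Concretely, fix $x \in \partial\Omega$ and set $\Omega_x(r) = B_x(r) \cap \Omega$, and define $I_x^\Omega(t,g) = |\partial\Omega_x(r) \cap \Omega|$ where $r$ is determined by $|\Omega_x(r)| = t$. Since $\Omega_x(r)$ is an admissible competitor for the relative isoperimetric problem (its relative boundary is exactly the geodesic sphere piece inside $\Omega$, as the part on $\partial\Omega$ does not count), we get $h_2^\Omega(\beta,g) \le I_x^\Omega(\beta,g)$, and similarly $h_2^{\mathbb{H}_k^n}(\beta,g_k) = I_{\bar x}^{\mathbb{H}_k^n}(\beta,g_k)$ for $\bar x \in \partial\mathbb{H}_k^n$ (half-balls centered on the boundary hyperplane are the isoperimetric regions in the half-space model). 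Then $h_2^\Omega(\beta,g) - h_2^{\mathbb{H}_k^n}(\beta,g_k) \le D(\beta) := I_x^\Omega(\beta,g) - I_{\bar x}^{\mathbb{H}_k^n}(\beta,g_k)$, and we bound $D(\beta) = \int_0^\beta D'(t)\,dt$ as before.

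Next I would carry out the derivative computation in geodesic polar coordinates centered at $x$, but now integrating $\theta$ only over the hemisphere $\SS^{n-1}_+ = \{\theta : \exp_x(s\theta) \in \Omega \text{ for small } s\}$ — here convexity of $\Omega$ is essential: it guarantees that once a unit-speed geodesic from $x$ leaves $\Omega$ it never returns, so the set of admissible directions and the cut structure behave monotonically in $r$, and the formulas $\frac{d}{dr}|\partial\Omega_x(r)\cap\Omega| = \int_{\SS^{n-1}_+} m\, A\, d\theta$ and $\frac{dt}{dr} = |\partial\Omega_x(r)\cap\Omega|$ hold. One subtlety: as $r$ grows the domain of integration in $\theta$ can shrink (the sphere piece may start hitting $\partial\Omega$), which only \emph{decreases} $\frac{d}{dr}|\partial\Omega_x(r)\cap\Omega|$ relative to the naive computation, so the inequality $D'(t) \le \frac{\int_{\SS^{n-1}_+} m_+^k A\, d\theta}{|\partial\Omega_x(r)\cap\Omega|} + (\bar m(r) - \bar m(\bar r))$ still goes the right way. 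From here the argument is essentially identical: apply Hölder over $\Omega_x(r_0) \subseteq B_x(r_0)$, invoke the Petersen–Wei mean curvature estimate $\|m_+^k\|_{2p}(r) \le \left(\frac{(n-1)(2p-1)}{2p-n}\|\mathrm{Ric}_-^k\|_p(r)\right)^{1/2}$ (which is a supremum over balls in all of $M$, hence unaffected by restricting to $\Omega$), and obtain
\begin{align*}
    h_2^\Omega(\beta,g) - h_2^{\mathbb{H}_k^n}(\beta,g_k) \le \left(\frac{(n-1)(2p-1)}{2p-n}\|\mathrm{Ric}_-^k\|_p\right)^{\frac{1}{2}}\beta^{\frac{2p-1}{2p}} + \int_0^\beta (\bar m(r) - \bar m(\bar r))\,dt.
\end{align*}

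For the last term I would compare $r$ and $\bar r$ via volume comparison applied to the half-balls. Since a half-ball in $\mathbb{H}_k^n$ has exactly half the volume of the corresponding full ball in $\MM_k^n$, and the same is true (by a bisection/symmetrization remark, or simply by noting the relative volume comparison descends to the $\theta$-hemisphere) for $\Omega_x(r)$ versus $B_x(r)$ up to the same multiplicative factor, the Chen–Wei estimate gives $|B_{\bar x}^{\mathbb{H}}(\bar r)| \le (1 + C(n,p,k,d)(\|\mathrm{Ric}_-^k\|_p)^{1/2})^{2p}|B_{\bar x}^{\mathbb{H}}(r)|$, i.e. $\int_0^{\bar r}\bar A \le (1+C\cdots)^{2p}\int_0^r \bar A$ with the \emph{same} $\bar A(s) = sn_k^{n-1}(s)$ as before (the half-space radial volume density agrees with the sphere's up to the constant $|\SS^{n-1}_+|/|\SS^{n-1}| = 1/2$ which cancels). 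Then Proposition \ref{prop:1} (for $k>0$, after shrinking $\epsilon$ so the factor is $\le 2$, and using that $\bar A$ is increasing on $(0,r]$ with the symmetry hypothesis $\bar A(r+s)\ge\bar A(r-s)$ valid for $d < \frac{\pi}{2\sqrt k}$) yields $\bar r \le (1+C(n,p,k,d)(\|\mathrm{Ric}_-^k\|_p)^{1/2})^{2p}\, r$. Using that $\bar m$ is decreasing, exactly as before, we set
$$\tilde f(\beta,n,p,k,d,\|\mathrm{Ric}_-^k\|_p) := \int_0^\beta \left(\bar m\!\left(\frac{\bar r}{(1+C(n,p,k,d)(\|\mathrm{Ric}_-^k\|_p)^{1/2})^{2p}}\right) - \bar m(\bar r)\right)dt,$$
which vanishes when $\|\mathrm{Ric}_-^k\|_p = 0$, completing the proof.

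The main obstacle I anticipate is justifying rigorously that the first-variation/coarea computation and the Hölder step are unaffected by the boundary — specifically, that convexity of $\Omega$ makes the map $r \mapsto \Omega_x(r)$ well-behaved (the relative boundary $\partial\Omega_x(r)\cap\Omega$ is a smooth hypersurface for a.e. $r$, the cut locus contributes measure zero, and the portion of $\partial\Omega_x(r)$ lying in $\partial\Omega$ genuinely does not contribute to the relative perimeter). This is where Bayle–Rosales' machinery for convex bodies \cite{bayle2} has to be quoted or adapted; everything downstream is a line-by-line repetition of the interior case with $\SS^{n-1}$ replaced by the hemisphere and the model replaced by $\mathbb{H}_k^n$.
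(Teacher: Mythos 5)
Your proposal is correct and follows essentially the same route as the paper: center half-balls at a point of $\partial\Omega$, use convexity to show the angular domain $U_r$ of directions shrinks with $r$ so the first-variation inequality goes the right way, then repeat the H\"older/Petersen--Wei/Chen--Wei steps with the model replaced by half-balls in the half-space. The one place both you and the paper lean on an unproved adaptation is the relative volume comparison for $B_x(r)\cap\Omega$ versus half-balls (the paper likewise only asserts it "can be obtained by modifying the proof in Chen and Wei," with a constant $C_1$ a constant multiple of $C$).
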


\begin{theorem}\label{thm:relative2}
     Let $(M^n,g)$ be a Riemmanian manifold and $\Om$ be a convex body on $M$. Let $p>\frac{n}{2}$ and  $k \leq  0$ be given. When $k=0$ assume that $\mathrm{diam}(M) = d < \infty$. Then for any $\beta \in (0, |\Omega|)$, we have
\begin{align*}
    h_2^\Om(\beta,g) - h_2^{\mathbb{H}_k^n}(\beta, g_k) &\leq \left( \frac{(n-1)(2p-1)}{2p-n} \|\mathrm{Ric}_-^k\|_p\right)^\frac{1}{2} \beta^{\frac{2p-1}{2p}}+ \tilde{f}(\beta,n,p,k,d,\|\mathrm{Ric}_-^k\|_p),
\end{align*}
\end{theorem}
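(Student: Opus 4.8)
Here is how I would prove Theorem~\ref{thm:relative2} (and, in parallel, Theorem~\ref{thm:relative1}).

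The plan is to run the scheme from the proofs of Theorems~\ref{them:1.1negative} and~\ref{them:1.1}, replacing geodesic balls about an interior point by geodesic balls about a point of $\partial\Omega$, and replacing the model space form by the half-space $\mathbb{H}_k^n$. Fix $x\in\partial\Omega$ and $\beta\in(0,|\Omega|)$. Then $B_x(r)\cap\Omega$ is an admissible relative competitor with relative perimeter $|\partial B_x(r)\cap\Omega|$ (the part of the geodesic sphere lying in the interior of $\Omega$; pieces of $\partial\Omega$ do not count). Set $I_x^\Omega(t,g):=|\partial B_x(r)\cap\Omega|$ where $|B_x(r)\cap\Omega|=t$, so that $h_2^\Omega(\beta,g)\leq I_x^\Omega(\beta,g)$. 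With $\bar{x}$ on the totally geodesic hyperplane bounding $\mathbb{H}_k^n$, geodesic half-balls realize $h_2^{\mathbb{H}_k^n}(\beta,g_k)$ for every $\beta$ when $k\leq 0$, so $h_2^{\mathbb{H}_k^n}(\beta,g_k)=I_{\bar{x}}^{\mathbb{H}_k^n}(\beta,g_k)$, and hence $h_2^\Omega(\beta,g)-h_2^{\mathbb{H}_k^n}(\beta,g_k)\leq D(\beta)$ with $D(t):=I_x^\Omega(t,g)-I_{\bar{x}}^{\mathbb{H}_k^n}(t,g_k)$ and $D(0)=0$.

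Next I would differentiate $D$ in geodesic polar coordinates at $x$. Write $U_x(s)\subseteq\SS^{n-1}$ for the set of directions whose minimizing geodesic stays in $\Omega$ up to radius $s$; convexity of $\Omega$ makes $U_x(s)$ non-increasing in $s$, since a geodesic leaving a convex body cannot re-enter. Then $\tfrac{dt}{dr}=\int_{U_x(r)}A(r,\theta)\,d\theta$, and differentiating the relative perimeter and discarding the non-positive Leibniz term coming from the shrinking $U_x(r)$ gives $\tfrac{d}{dr}|\partial B_x(r)\cap\Omega|\leq\int_{U_x(r)}m(r,\theta)A(r,\theta)\,d\theta$. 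In the model the bounding hyperplane is totally geodesic, so the admissible directions form the fixed open hemisphere and $(I_{\bar{x}}^{\mathbb{H}_k^n})'(t,g_k)=\bar{m}(\bar{r})$, exactly as in the absolute case. Copying the algebra of the proof of Theorem~\ref{them:1.1negative} with $\SS^{n-1}$ replaced by $U_x(r)$ yields
\[
D'(t)\leq\frac{\int_{U_x(r)}m_+^k(r,\theta)A(r,\theta)\,d\theta}{\int_{U_x(r)}A(r,\theta)\,d\theta}+\big[\bar{m}(r)-\bar{m}(\bar{r})\big];
\]
integrating over $(0,\beta)$, using $\int_{U_x(r)}(m_+^k)^{2p}A\,d\theta\leq\int_{B_x(r)}(m_+^k)^{2p}\,d\vol$, Hölder's inequality, and the Petersen--Wei mean curvature comparison \cite{PetersenWei} (a supremum over all geodesic balls, hence blind to $\Omega$) produces, as before,
\[
D(\beta)\leq\left(\frac{(n-1)(2p-1)}{2p-n}\|\mathrm{Ric}_-^k\|_p\right)^{1/2}\beta^{\frac{2p-1}{2p}}+\int_0^\beta\big(\bar{m}(r)-\bar{m}(\bar{r})\big)\,dt.
\]

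To estimate the last integral I would compare $r$ and $\bar{r}$ by a relative volume comparison. Writing $\phi(s):=|\partial B_x(s)\cap\Omega|$, the inequality $\big(\phi(s)/sn_k^{n-1}(s)\big)'\leq sn_k^{-(n-1)}(s)\int_{U_x(s)}m_+^k A\,d\theta$ together with $\phi(s)/sn_k^{n-1}(s)\to\tfrac12 n\omega_n$ as $s\to 0$ (near $x$ the convex body is Euclidean and half-space-like to leading order) reproduces the Chen--Wei argument \cite{chenwei} with $\SS^{n-1}$ replaced by $U_x(s)$; since $|B_{\bar{x}}(\rho)\cap\mathbb{H}_k^n|=\tfrac12 n\omega_n\int_0^\rho sn_k^{n-1}$ this gives
\[
\int_0^{\bar{r}}sn_k^{n-1}(s)\,ds\leq\big(1+C(n,p,k,d)\|\mathrm{Ric}_-^k\|_p^{1/2}\big)^{2p}\int_0^{r}sn_k^{n-1}(s)\,ds,
\]
with $C$ bounded when $k<0$ and with the finiteness of $\mathrm{diam}(M)=d$ used to bound $C$ when $k=0$. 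Proposition~\ref{prop2} for $k<0$ (and the Remark after it for $k=0$) then gives $\bar{r}\leq(1+C\|\mathrm{Ric}_-^k\|_p^{1/2})^{2p/n}r$, so, since $\bar{m}$ is decreasing,
\[
\int_0^\beta\big(\bar{m}(r)-\bar{m}(\bar{r})\big)\,dt\leq\int_0^\beta\Big(\bar{m}\big((1+C\|\mathrm{Ric}_-^k\|_p^{1/2})^{-2p/n}\bar{r}\big)-\bar{m}(\bar{r})\Big)\,dt=:\tilde{f}(\beta,n,p,k,d,\|\mathrm{Ric}_-^k\|_p),
\]
which vanishes when $\|\mathrm{Ric}_-^k\|_p=0$. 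This proves Theorem~\ref{thm:relative2}; Theorem~\ref{thm:relative1} follows verbatim with Proposition~\ref{prop:1} in place of Proposition~\ref{prop2} (first shrinking $\epsilon$ so that the volume factor is $\leq 2$) and with $\mathrm{diam}(M)<\tfrac{\pi}{2\sqrt{k}}$ legitimizing the mean curvature and volume comparisons in the positive case.

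I expect the main obstacle to be the careful first-variation computation of $|\partial B_x(r)\cap\Omega|$: handling the cut locus of $x$ and the locus where $\partial B_x(r)$ meets $\partial\Omega$, and verifying that convexity genuinely forces both the monotonicity of $U_x(s)$ and the favorable sign of the discarded boundary term. Closely related is confirming that the Chen--Wei volume estimate localizes cleanly to the moving cone of directions $U_x(s)$ without generating an extra boundary contribution. The mean-curvature part of the argument transfers with no change, since the Petersen--Wei bound is a global supremum over geodesic balls that is insensitive to $\Omega$.
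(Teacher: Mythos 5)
Your proposal is correct and takes essentially the same route as the paper's proof: balls centered at a point of $\partial\Omega$ compared with half-balls in $\mathbb{H}_k^n$, monotonicity of the direction set $U_x(s)$ from convexity to discard the Leibniz boundary term, the Petersen--Wei mean curvature bound via H\"older, and a Chen--Wei-type volume comparison localized to $B_x(r)\cap\Omega$ feeding into Proposition~\ref{prop2}. The obstacles you flag (the first-variation computation for $|\partial B_x(r)\cap\Omega|$ and the localization of the Chen--Wei estimate) are precisely the points the paper also treats only briefly.
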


\begin{remark}
    In particular, $\tilde{f} = 0$ when $\|\mathrm{Ric}_-^k\|_{p} = 0$, which recovers $h_2$ comparison theorem pointwise Ricci curvature lower bound case in \cite{bayle2}.
\end{remark}
We only mention the differences from the proof of Theorem \ref{them:1.1negative} and Theorem \ref{them:1.1}.
\begin{proof}[Proof of Theorem \ref{thm:relative1} and Theorem \ref{thm:relative2}]
    Take any $x \in \partial \Omega$.
    Define a function $I_x(t, g) = |\partial B_x(r) \cap \Om|$ when $t = |B_x(r) \cap \Om|$ and $I_{\bar x} (t, g) = |\partial \bar B (\bar r) \cap \mathbb{H}_k^n|$ when $|\bar B(\bar r)\cap \mathbb{H}_k^n| = t$.
    Let 
    $$U_t =\{\theta \in \SS^{n-1} \, | \, (t,\theta) \in B_x(r)\cap\Om\},$$
    where $(t,\theta)$ is a geodesic polar coordinates centered at $x$.
    Convexity of $\Omega$ implies that if $a \leq b$, then  $U_b \subseteq U_a$.
    Thus,
    \begin{align*}
        \frac{d}{dr} |\partial B_x(r) \cap \Om| & = \lim_{h \to 0} \frac{| \partial B_x (r+h) \cap \Om | - |\partial B_x(r) \cap \Om|}{h}\\
        & = \lim\limits_{h \to 0}\frac{\int_{U_{r+h}} A(r+h, \theta) d \theta - \int_{U_r} A(r,\theta) d \theta}{h}\\
        & \leq \lim_{h \to 0} \frac{\int_{U_r} A(r+h, \theta) d\theta - \int_{U_r} A(r,\theta) d\theta}{h}\\
        & = \int_{U_r} A'(r,\theta) d \theta = \int_{U_r} m(r,\theta) A(r, \theta) d \theta.
    \end{align*}
    Also, it follows from $| B_x(r) \cap \Om| = t$ that
    \begin{align*}
        \frac{dt}{dr} 
        = \frac{d}{dr} \int_0^r \int_{U_s} A(s,\theta) d\theta ds
         = \int_{U_r} A(r,\theta ) d \theta
         = | \partial B_x(r) \cap \Om|.
    \end{align*}
    Those two inequalities imply that
    $$I_x' (t, g) = \frac{d I_x}{dr} \frac{dr}{dt} \leq \frac{\int_{U_r} m(r,\theta) A(r,\theta) d \theta}{ |\partial B_x (r) \cap \Om|}.$$
    Take any $\Bar{x}$ on the boundary of $\mathbb{H}_k^n$.
    Then we have
    $$I_{\bar{x}}'(t, g_k) = \frac{\int_{\SS_{+}^{n-1}} \bar{m}(\bar r) \bar{A}(\bar r) d\theta}{ \frac{1}{2}| \partial B_{\bar x}(\bar r)|} = \bar m (\bar r),$$
    where $\bar r$ is determined by $|B_{\bar x} (\bar r) \cap \mathbb{H}_k^n| = t$.
    Then define, as before $D(t) = I_x (t, g) - I_{\bar{x}} (t, g_k)$, we compute:
\begin{align*}
    D'(t) 
    & = \frac{\int_{U_r} [m(r,\theta)-\bar{m}(r)] A(r,\theta ) d \theta}{| \partial B_x(r) \cap \Om|} +  [\bar{m}(r)-\bar{m}(\bar{r})],
\end{align*}
where $t = |B_x(r)| = |B_{\bar{x}}(\bar{r})|$.
Let $m_+^k(r):= (m(r,\theta) -\bar{m}(r))_+$.
Then
\begin{align}\label{ineq:D}
    D(\beta) & = \int_0^\beta D'(t) d t \nonumber\\
    &\leq \int_0^\beta \frac{\int_{U_r} m_+^k(r) A(r,\theta ) d \theta}{|\partial B_x(r) \cap \Omega|} dt + \int_0^\beta (\bar{m}(r) - \bar{m}(\bar{r})) d t \nonumber\\
    &\leq \int_0^{r_0} \int_{U_r} m_+^k(r) A(r,\theta ) d \theta dr + \int_0^{\beta} (\bar{m}(r) - \bar{m}(\bar{r})) d t,
\end{align}
where $r_0$ is determined by $\beta = |B_x(r_0)\cap \Omega|$.
Using Holder's inequality, we have
\begin{align*}
    \int_0^{r_0}\int_{U_r} m_+^k A(r,\theta ) d \theta dr
    &\leq \left( \int_0^{r_0}\int_{U_r} (m_+^k)^{2p} A(r,\theta ) d \theta dr\right)^\frac{1}{2p} \left( \int_0^{r_0}\int_{U_r}  A(r,\theta ) d \theta dr\right)^\frac{2p-1}{2p}\\
    & =\left( \int_{B_x(r_0)\cap \Om} (m_+^k)^{2p} d vol\right)^\frac{1}{2p}|B_x(r_0)\cap \Om|^\frac{2p-1}{2p}\\
    & \leq \|m_+^k\|_{2p}(r_0) \beta^{\frac{2p-1}{2p}}.
\end{align*}
For the second term in the right hand side of \eqref{ineq:D}, we need a volume comparison on a convex set with integral Ricci curvature:
$$|B_x(r)\cap \Omega| \leq (1+C_1(n,p,k,d) (\|\mathrm{Ric}_-^k\|_p )^\frac{1}{2})^{2p} |B_{\bar{x}} (r)\cap \mathbb{H}_k^n|,$$
where $C_1(n,p,k,d)$ is given by constant multiple of $C(n,p,k,d)$ defined in the proof of Theorem \ref{them:1.1negative} and \ref{them:1.1}.
The above inequality can be obtained by modifying the proof in Chen and Wei \cite{chenwei}.
Now following the proof of Theorem \ref{them:1.1negative} and Theorem \ref{them:1.1}, we obtain:
$$h_2^\Om(\beta,g) - h_2^{\mathbb{H}_k^n}(\beta,\bar{g})  \leq \left( \frac{(n-1)(2p-1)}{2p-n} \|\mathrm{Ric}_-^k\|_p\right)^\frac{1}{2} \beta^{\frac{2p-1}{2p}} + \tilde{f}(\beta, n , p , k , d, \|\mathrm{Ric}_-^k\|_p).$$
\end{proof}

\section{Comparisons for $h_1$ isoperimetric profile}
We are now ready to discuss the differential comparison result for the $h_1$ isoperimetric profile function, as stated in Theorem \ref{thm:supsolution}. Our error term in this context arises from the choice of $\alpha > 1$, as we utilize the Heintze-Kartcher generalization for integral curvature proposed by Petersen and Sprouse (Theorem 4.1, \cite{Petersen-Sprouse}) to replace the conventional Heintze-Kartcher estimate in the argument presented by Ni and Wang in Theorem 2.2 of \cite{Ni}. We recover the pointwise result by selecting $\alpha = 0$ in the limit.
\begin{proof}[Proof of Theorem \ref{thm:supsolution}]
    We initiate our proof by utilizing Aubrey's generalized Bonnet-Myers theorem (Theorem 1.2, \cite{Aubry07}), which establishes that the diameter of manifolds with small integral Ricci curvature is uniformly bounded. This ensures not only the compactness of $M$ but also enables us to later apply Petersen and Sprouse's Heintze-Karcher inequality (Theorem 4.1, \cite{Petersen-Sprouse}). 
    
    Choose any $\beta_0$ and let $U$ be a small neighborhood  of $\beta_0$.
  Let $\beta_0$ be any chosen value, and consider a small neighborhood $U$ around $\beta_0$. Within $U$, let $0 < \psi(\beta) \leq h_1(\beta, g)$ be a smooth function satisfying $\psi(\beta_0) = h_1(\beta_0, g)$. Suppose $\Omega$ is an isoperimetric region with volume $\beta_0$, meaning $\frac{|\partial \Omega|}{|M|} = h_1(\beta_0, g) = \psi(\beta_0)$. Then, $\Omega$ has constant mean curvature $m$ on its smooth part of the boundary $\partial \Omega$. We write:  
    $$m = \psi' (\beta_0),$$
    as obtained from the first variation formula. For a detailed explanation, refer to the proof of Theorem 2.2 in \cite{Ni}.
    
    Define
    \begin{align*}
        r_0 &= \max\{dist(x, \partial \Omega )\, | \, x \in \Omega\}\\
        r_1 &= \max \{dist(x, \partial \Omega) \, | \, x \in M \setminus \Omega\} \leq d -r_0.
    \end{align*}
Given our $\alpha>1$ and $p>\frac{n}{2}$ given we apply now Heintze-Kartcher inequality with integral Ricci curvature bounds (Theorem 4.1, \cite{Petersen-Sprouse}), so we can find $\delta(n,p,\alpha,k)>0$ such that if $\|\mathrm{Ric}_-^k\|_p \leq \delta$,
        then 
    $$\textup{vol} ( \Omega) \leq \alpha \frac{\textup{area}(\partial \Omega)}{\textup{area}(\partial \bar \Omega)} \textup{vol} ( \bar \Omega)$$
and
$$\textup{vol} ( M \setminus \Omega) \leq \alpha \frac{\textup{area}(\partial \Omega)}{\textup{area}(\partial \bar \Omega)} \textup{vol} ( \bar B \setminus \bar \Omega),$$
where $\bar \Omega$ is the ball of radius $r_0$ having the constant mean curvature $m$ and $\bar B $ is the ball of radius $\textup{diam}(M)$ on the model manifold $\SS^n_k$.
By adding two inequalities, we have
$$\textup{vol}(M) \leq  \alpha \frac{\textup{area}(\partial \Omega)}{\textup{area}(\partial \bar \Omega)}\textup{vol}(\bar B).$$
Thus,
$$1 \leq \alpha \cdot \frac{\textup{area}(\partial \Omega)}{\textup{vol}(M)} \cdot \frac{\textup{vol}(\bar B)}{\textup{area}(\partial \bar \Omega)}$$
or equivalently,
\begin{align}\label{ineq:3-1-1}
    1 \leq \alpha \psi(\beta_0)\cdot \frac{\textup{vol}(\bar B)}{\textup{area}(\partial \bar \Omega)}.
\end{align}

We now aim to compute $|\partial \bar \Omega|$. Let's consider the volume element $\bar A(r) = sn_k^{n-1}(r)$ on the model manifold $\mathbb{S}^n_k$. Then, $\bar m(r) = (n-1)\sqrt{k} \cot(\sqrt{k}r)$, where $\bar m(r)$ denotes the mean curvature on the boundary of a ball with radius $r$ on $\SS^n_k$. We have:
$$sn_k(r) = \frac{n-1}{\sqrt{(\bar m(r))^2 + (n-1)^2 k}}.$$
Since $\bar \Omega$ has radius $r_0$ and constant mean curvature $m = \bar m (r_0) = \psi'(\beta_0)$, utilizing the above expression, we obtain:
\begin{align}\label{ineq:3-1-2}
    |\partial \bar \Om| = |\SS^{n-1}| sn_k^{n-1}(r_0) = \left(k + \left(\frac{\psi'(\beta_0)}{n-1}\right)^2\right)^{-\frac{n-1}{2}}.
\end{align}
Note that
\begin{align}\label{ineq:3-1-3}
    |\bar B| = |\SS^{n-1}| \int_0^{d'} sn_k^{n-1}t\, dt,
\end{align}
where $d' = \min \{\pi, \mathrm{diam}(M)\}$.
Thus, combining \eqref{ineq:3-1-1}, \eqref{ineq:3-1-2}, and \eqref{ineq:3-1-3}, we have
\begin{align*}
    1 &\leq \alpha \psi(\beta_0)\cdot \frac{|\bar B|}{|\partial \bar \Omega|}
     = \alpha \psi (\beta_0)\cdot \frac{\int_0^{d'} sn_k^{n-1} t \, dt}{ sn_k^{n-1}(r_0)}\\
    & = \alpha \psi(\beta_0) \left(k + \left(\frac{\psi'(\beta_0)}{n-1}\right)^2\right)^\frac{n-1}{2} \int_0^{d'} sn_k^{n-1}t \, dt.
\end{align*}
Hence we obtain
$$\alpha \psi (\beta_0)\left(k + \left(\frac{\psi'(\beta_0)}{n-1}\right)^2\right)^\frac{n-1}{2} \geq \frac{1}{\int_0^{d'} sn_k^{n-1}t \, dt}.$$
\end{proof}

\begin{proof}[Proof of Theorem \ref{thm:1.3}]
    We first observe that there exists $\delta_1 >0$ such that if $\|\mathrm{Ric}_-^k\|_p<\delta_1$, then $M$ is compact, as ensured by the generalized Bonnet-Myers Theorem with integral Ricci curvature bounds (Theorem 1.2, \cite{Aubry07}). With this established, we proceed to consider a normalized isoperimetric profile function $h_1(\beta,g)$ on a manifold $(M,g)$ with $\|\mathrm{Ric}_-^k\|_p<\delta = \min\{\delta_1,\delta_2\}$, where $\delta_2$ is as defined in Theorem \ref{thm:supsolution}.
    
    In order to get a contradiction, we assume that there is $\beta\in (0,1)$ satisfying
    \begin{equation}\label{ineq:h1-assumption}
        h_1 (\beta, g) < L \phi(\beta) - \epsilon,
    \end{equation}
    where $\phi(\beta) = h_1(\beta, g_k)$.
    Define a function $f$ on $(0,1)$ as
    $$f(\beta) : = \frac{h_1(\beta, g) + \epsilon}{L\phi(\beta)}.$$
    Then by asymptotic of $h_1$ and $\phi$ as in \eqref{eq:asymptotics}, we have 
    \begin{align*}
        \lim\limits_{\beta \to 0^+} f(\beta) & = \lim\limits_{\beta \to 0^+} \frac{h_1(\beta, g) + \epsilon}{L \phi(\beta)} = +\infty.
    \end{align*}
    Since $h_1$ and $\phi$ are symmetric, $f(1- \beta) = f(\beta)$ and so $\lim\limits_{\beta \to 1^-} f(\beta) = +\infty$.
    Then we know that $f$ attain its minimum $\lambda$ at $\beta_0 \in (0,1)$.
    That is,
    $$h_1(\beta_0, g) + \epsilon = \lambda L \phi (\beta_0)$$
    and
    $$h_1(\beta, g) + \epsilon \geq \lambda L \phi (\beta)\quad \forall \beta \in (0,1).$$
    Moreover, our assumption \eqref{ineq:h1-assumption} implies that $\lambda <1$.

    Consider a smooth function $\psi$ such that $0 < \psi (\beta)\leq h_1(\beta,g)$ near $\beta_0$ and $\psi(\beta_0) = h_1(\beta_0, g)$.
    Then we get
    \begin{equation}\label{eq:h1-psi}
        \psi (\beta_0,g) = \lambda L \phi (\beta_0) - \epsilon, \quad \psi'(\beta_0,g) = \lambda L \phi'(\beta_0).
    \end{equation}
    By Theorem \ref{thm:supsolution} with a support function $\psi$, we have
    \begin{align}\label{ineq:h1-support}
        \frac{1}{\lambda_{n,d'}^k} &\leq \alpha \psi \left( k+ \left( \frac{\psi'}{n-1}\right)^2 \right)^{\frac{n-1}{2}}.
    \end{align}
    At $\beta_0$, the inequalities \eqref{eq:h1-psi} and \eqref{ineq:h1-support} imply that
    \begin{align}\label{ineq:temp1}
        \frac{1}{\lambda_{n,d'}^k} &\leq \alpha (\lambda L \phi - \epsilon) \left( k+ \left( \frac{\lambda L \phi'}{n-1}\right)^2 \right)^{\frac{n-1}{2}}\nonumber\\
        & <\alpha ( L \phi - \epsilon) \left( k+ \left( \frac{ L \phi'}{n-1}\right)^2 \right)^{\frac{n-1}{2}},
    \end{align}
    where the last inequality comes from $L \geq 1$, $\phi(\beta_0)>0$, $\lambda <1$, and $\alpha >1$.
    Let us note that $\phi$ is a solution to the differential equation
    \begin{equation}\label{eq:h1-phi}
        \phi \left( k+ \left( \frac{\phi'}{n-1}\right)^2 \right)^{\frac{n-1}{2}} = \frac{1}{\gamma_n}.
    \end{equation}
    Then combining inequalities \eqref{ineq:temp1} and \eqref{eq:h1-phi}, we get
    \begin{align*}
        \alpha ( L \phi - \epsilon) \left( k+ \left( \frac{ L \phi'}{n-1}\right)^2 \right)^{\frac{n-1}{2}} &>\frac{\gamma_n}{\lambda_{n,d'}^k} \phi \left( k+ \left( \frac{\phi'}{n-1}\right)^2 \right)^{\frac{n-1}{2}}\\
        & = L^n \phi \left( k+ \left( \frac{\phi'}{n-1}\right)^2 \right)^{\frac{n-1}{2}}\\
        & = L \phi \left( L^2k+ \left( \frac{L\phi'}{n-1}\right)^2 \right)^{\frac{n-1}{2}}\\
        &\geq L \phi \left( k+ \left( \frac{L\phi'}{n-1}\right)^2 \right)^{\frac{n-1}{2}}.
    \end{align*}
    Thus,
    $\alpha ( L\phi - \epsilon) > L \phi$
    and
    $$\epsilon < \frac{(\alpha -1) L \phi}{\alpha} \leq \frac{\alpha-1}{\alpha} \frac{L}{\int_0^\pi \sin(\sqrt{k}t) \,dt},$$
    which is a contradiction.
\end{proof}


\bibliographystyle{plain}
\end{document}